    \definecolor{linkcolor}{RGB}{0,0,128}
\pgfplotsset{compat=1.17} 
\theoremstyle{plain}
\newtheorem{prototheorem}{theorem}
\newtheorem{theorem}[prototheorem]{Theorem}
\theoremstyle{plain}
\theoremstyle{plain}
\newtheorem{prototheorem3}{theorem}
\newtheorem{lemma}[prototheorem3]{Lemma}
\theoremstyle{remark}
\theoremstyle{definition}
\newcommand{\eps}{\varepsilon}
\newcommand{\W}{\boldsymbol{\mathcal{W}}}
\newcommand{\law}{\operatorname{Law}}
\newcommand{\TV}{\mathsf{TV}}
\newcommand{\rn}[1]{\Romanbar{#1}}
\newcommand{\tmix}{\tau_{\operatorname{mix}}}
\newcommand{\diag}{\operatorname{diag}}
\newcommand{\C}{\mathcal{C}}
\newcommand{\Unif}{\mathrm{Unif}}
\newcommand{\HR}{\mathrm{H\&R}}
\newcommand{\gHR}{\mathrm{gH\&R}}
\title{Ballistic Convergence in Hit-and-Run Monte Carlo and a Coordinate-free Randomized Kaczmarz Algorithm}
\author{Nawaf Bou-Rabee\thanks{Department of Mathematical Sciences, Rutgers University, \href{mailto:nawaf.bourabee@rutgers.edu}  {\texttt{nawaf.bourabee@rutgers.edu}} \orcidlink{0000-0001-9280-9808}}
\and
Andreas Eberle\thanks{Institute for Applied Mathematics, University of Bonn, 
\href{mailto:eberle@uni-bonn.de}{\texttt{eberle@uni-bonn.de}} \orcidlink{0000-0003-0346-3820}}
\and
Stefan Oberd\"orster\thanks{Institute for Applied Mathematics, University of Bonn, 
\href{mailto:oberdoerster@uni-bonn.de}{\texttt{oberdoerster@uni-bonn.de}} \orcidlink{0009-0004-0734-0101}}
}
\date{December 2024}
\begin{document}

\maketitle

\begin{abstract}
Hit-and-Run is a coordinate-free Gibbs sampler, yet the quantitative advantages of its coordinate-free property remain largely unexplored beyond empirical studies.  In this paper, we prove sharp estimates for the Wasserstein contraction of Hit-and-Run in Gaussian target measures via coupling methods and conclude mixing time bounds.  Our results uncover ballistic and superdiffusive convergence rates in certain settings.  Furthermore, we extend these insights to a coordinate-free variant of the randomized Kaczmarz algorithm, an iterative method for linear systems, and demonstrate analogous convergence rates.  These findings offer new insights into the advantages and limitations of coordinate-free methods for both sampling and optimization.
\end{abstract}

\section{Introduction}

The Gibbs sampler, also known as Glauber dynamics or the heat-bath algorithm, is a Markov chain Monte Carlo method for sampling from complex multivariate distributions by alternately sampling from  each variate’s one-dimensional conditional distribution.  This approach reduces the high-dimensional sampling problem to dimension one; for overviews,  see \cite{casella1992explaining,mcbook}.  The method was introduced by Glauber \cite{Glauber1963} for simulations of Ising models, and later introduced to statistics by Geman and Geman \cite{geman1984stochastic} in the context of image processing.  Gelfand and Smith \cite{GelfandSmith1990} further popularized it for Bayesian inference, and today, the Gibbs sampler is widely available in probabilistic programming languages such as BUGS \cite{spiegelhalter1996bugs}, JAGS \cite{plummer2003jags}, and Nimble \cite{nimble-article:2017}.   

The Gibbs sampler relies on a chosen coordinate system.  When the coordinate directions are not aligned with the geometry of the target distribution -- particularly when the components are highly correlated -- the  sampler may exhibit diffusive behavior, slowing convergence.  For example, in a centered bivariate normal with unit variances and correlation coefficient $\rho \in (-1,1)$, the systematic-scan Gibbs sampler at a current state $(x,y)$, generates a Markov chain with updates \[
X \sim \mathcal{N}(\rho y, 1-\rho^2) \;, \quad \text{and} \quad Y \sim \mathcal{N}(\rho X, 1-\rho^2 ) \;. 
\]
If we consider only the evolution of the $Y$-component and define $h = 1-\rho^2$, the discrete generator of this component converges to that of an  Ornstein-Uhlenbeck process in the high-correlation limit, specifically: for any $y \in \mathbb{R}$ and twice continuously differentiable function $f: \mathbb{R} \to \mathbb{R}$, \[
\lim_{h \searrow  0} \frac{\mathbb Ef(Y) - f(y)}{h} = - y f'(y) + f''(y) \;. 
\]  This diffusive behavior of Gibbs is illustrated in the left panel of Figure~\ref{fig:HRvsGibbs}.

\begin{figure}
\noindent\begin{minipage}{.5\textwidth}
\centering
\begin{tikzpicture}[scale=4]

\draw[->] (-0.75,0) -- (0.75,0);
\draw[->] (0,-0.75) -- (0,0.75);
\draw[rotate=45, dashed, line width=1pt] (0,0) ellipse (1 and 1/10);

\draw[gray, line width=2pt] (-0.6,-0.65) -- (-0.6,-0.55);
\draw[gray, line width=2pt] (-0.6,-0.55) -- (-0.52,-0.55);
\draw[gray, line width=2pt] (-0.52,-0.55) -- (-0.52,-0.45);
\draw[gray, line width=2pt] (-0.52,-0.45) -- (-0.52,-0.50);
\draw[gray, line width=2pt] (-0.52,-0.50) -- (-0.45,-0.50);
\draw[gray, line width=2pt] (-0.45,-0.50) -- (-0.45,-0.37);
\draw[gray, line width=2pt] (-0.45,-0.50) -- (-0.45,-0.43);
\draw[gray, line width=2pt] (-0.45,-0.43) -- (-0.4,-0.43);
\draw[gray, line width=2pt] (-0.4,-0.43) -- (-0.35,-0.43);
\draw[gray, line width=2pt] (-0.35,-0.43) -- (-0.35,-0.35);

\filldraw (-0.6,-0.65) circle (0.5pt);
\filldraw (-0.6,-0.55) circle (0.5pt);
\filldraw (-0.52,-0.55) circle (0.5pt);
\filldraw (-0.52,-0.45) circle (0.5pt);
\filldraw (-0.52,-0.50) circle (0.5pt);
\filldraw (-0.45,-0.50) circle (0.5pt);
\filldraw (-0.45,-0.37) circle (0.5pt);
\filldraw (-0.45,-0.43) circle (0.5pt);
\filldraw (-0.4,-0.43) circle (0.5pt);
\filldraw (-0.35,-0.43) circle (0.5pt);
\filldraw (-0.35,-0.35) circle (0.5pt);

\end{tikzpicture}
\end{minipage}%
\noindent\begin{minipage}{.5\textwidth}
\centering
\begin{tikzpicture}[scale=4]

\draw[->] (-0.75,0) -- (0.75,0);
\draw[->] (0,-0.75) -- (0,0.75);
\draw[rotate=45, dashed, line width=1pt] (0,0) ellipse (1 and 1/10);

\draw[gray, line width=2pt] (-0.6,-0.65) -- (-0.63,-0.6);
\draw[gray, line width=2pt] (-0.63,-0.6) -- (-0.55,-0.58);
\draw[gray, line width=2pt] (-0.55,-0.58) -- (-0.6,-0.55);
\draw[gray, line width=2pt] (-0.6,-0.55) -- (0.55,0.5);
\draw[gray, line width=2pt] (0.55,0.5) -- (0.45,0.48);
\draw[gray, line width=2pt] (0.45,0.48) -- (0.45,0.37);
\draw[gray, line width=2pt] (0.45,0.37) -- (0.27,0.30);
\draw[gray, line width=2pt] (0.27,0.30) -- (0.25,0.15);
\draw[gray, line width=2pt] (0.25,0.15) -- (-0.25,-0.15);
\draw[gray, line width=2pt] (-0.25,-0.15) -- (-0.2,-0.28);

\filldraw (-0.6,-0.65) circle (0.5pt);
\filldraw (-0.63,-0.6) circle (0.5pt);
\filldraw (-0.55,-0.58) circle (0.5pt);
\filldraw (-0.6,-0.55) circle (0.5pt);
\filldraw (0.55,0.5) circle (0.5pt);
\filldraw (0.45,0.48) circle (0.5pt);
\filldraw (0.45,0.37) circle (0.5pt);
\filldraw (0.27,0.30) circle (0.5pt);
\filldraw (0.25,0.15) circle (0.5pt);
\filldraw (-0.25,-0.15) circle (0.5pt);
\filldraw (-0.2,-0.28) circle (0.5pt);

\end{tikzpicture}
\end{minipage}%
\caption{\textit{Comparison of 10 steps of Gibbs vs.~Hit-and-Run in a narrow bivariate Gaussian target with condition number $\kappa\gg1$:   Gibbs is limited to small, incremental steps along the coordinate axes, resulting in a slow diffusive mixing rate of $\kappa^{-1}$. In contrast, Hit-and-Run samples directions uniformly and can take large, global steps whenever its sampled direction sufficiently aligns with the major axis of the ellipse.  This enables Hit-and-Run to achieve a faster ballistic mixing rate of $\kappa^{-1/2}$ (see Section \ref{sec:ballistic}).}}
\label{fig:HRvsGibbs}
\end{figure}
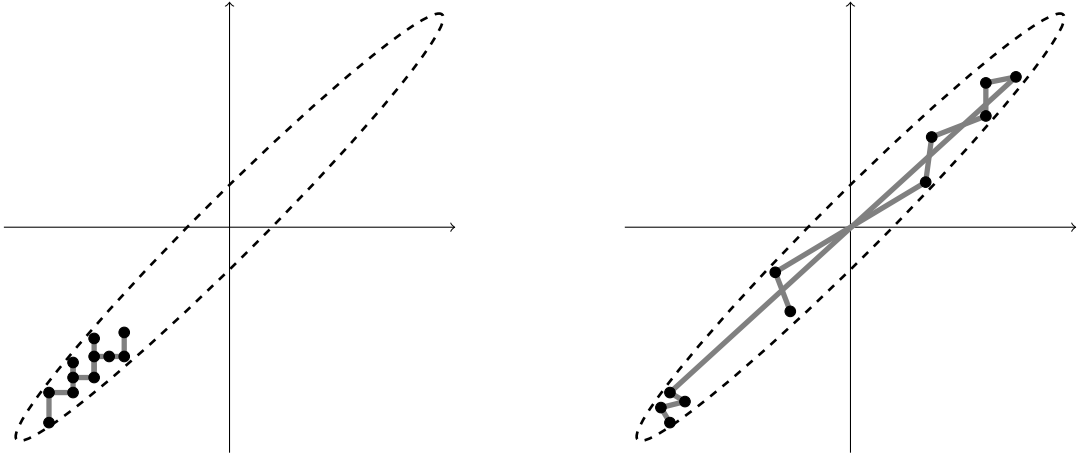

The Gibbs sampler’s reliance on a specific coordinate system motivates Hit-and-Run,  a coordinate-free alternative.  Hit-and-Run samples from the target ``conditioned’’ to a line through the current state chosen uniformly at random.  As described by Andersen and Diaconis \cite{Diaconis2007HitandRun}, the algorithm ``hits’’ a uniformly distributed direction and then ``runs’’ within that direction from the current state.  The basic Hit-and-Run method for sampling from an arbitrary probability density in high-dimensional Euclidean space was first introduced by Turchin \cite{Turchin1971}, and later by Smith \cite{Smith1984} for sampling from the uniform distribution over a convex body.  

Hit-and-Run is a well-studied method for uniform sampling over convex bodies \cite{Lovasz99,lovasz2004hit,vempala2005geometric,lovasz2006fast}.   Lovász \cite{Lovasz99} used conductance arguments to prove a sharp diffusive mixing time bound for Hit-and-Run, showing that it scales linearly with the condition number of the body, under the assumption of a warm start.
Later, Lovász and Vempala \cite{lovasz2006fast} removed the warm start assumption by adding a warm-up phase \cite{lovasz2004hit}.  

In contrast, the ball walk \cite{kannan1997random}, while simpler to implement, can require exponential time to escape corners, making it less practical for general convex bodies.  The recently proposed In-and-Out sampler \cite{inandout2024}, a proximal sampler for uniform distributions over convex bodies, achieves similar runtime complexity to the ball walk but offers stronger guarantees in terms of Rényi divergence.  Appendix A of \cite{inandout2024} provides a detailed comparison of the ball walk, Hit-and-Run methods, and related methods for uniform sampling over convex bodies.

Chen and Eldan \cite{chen2022hit} improved upon Lovász's bounds for Hit-and-Run in isotropic bodies by using stochastic localization \cite{chen2022localization}, replacing the dependence on the condition number with a dependence on the KLS constant.  More recently,  Ascolani, Lavenant and Zanella~\cite{ascolani2024} showed relative entropy contraction at a diffusive rate for both Gibbs and Hit-and-Run in strongly log-concave distributions, a setting that generalizes uniform sampling over convex bodies. 

Hit-and-Run has inspired numerous variations and generalizations over the years \cite{ChenSchmeiser1993,Belisle1998,Diaconis2007HitandRun}. Andersen and Diaconis \cite{Diaconis2007HitandRun} introduced a significant generalization of Hit-and-Run that unifies a wide range of methods, including the Gibbs sampler, Swendsen-Wang algorithm, data augmentation, auxiliary variable Markov chains \cite{RudolfUllrich2018}, slice sampling, and the Burnside process. This framework allows the next state to be sampled from an abstract line through the current state, where the direction is drawn from a generalized distribution that need not be uniform.  Recently, this framework has been extended to include locally adaptive Hamiltonian Monte Carlo methods, such as the No-U-Turn Sampler \cite{BouRabeeCarpenterMarsden2024}. 

\paragraph{Main result for Hit-and-Run}
Unlike the Gibbs sampler, which is constrained to moving along the coordinate axes,  Hit-and-Run can move in any direction.  This key distinction allows Hit-and-Run to make \emph{global moves}, even when Gibbs is restricted to local ones. Figure \ref{fig:HRvsGibbs} illustrates this advantage, which has been empirically observed in bivariate Gaussians \cite{ChenSchmeiser1993}, but has not yet been rigorously quantified.  A key contribution of this paper is to provide a rigorous quantification of this advantage of coordinate-free Monte Carlo methods and to show its limitations.

We focus on centered Gaussian target measures with covariance matrix $\C$.   This setting is not trivial because, even for Gaussian targets, the transition distribution of Hit-and-Run is not Gaussian, see Figure \ref{fig:hr_densities}.  In this context, we state our results for the broader class of \emph{generalized Hit-and-Run samplers}, which allows directions to be chosen from an arbitrary distribution $\tau$ on the unit sphere, rather than just the uniform distribution.  

To obtain a mixing time upper bound, we use a coupling argument.  Specifically, we consider a synchronous coupling of two copies of the generalized Hit-and-Run sampler starting from  different initial points.  Remarkably, at each  step, the difference between the components of this coupling is projected onto a random $(d-1)$-dimensional linear subspace.  This insight allows us to apply a simple yet powerful contraction estimate for such random projections formalized in Lemma \ref{lem:contrproj}.  

Our analysis leads to sharp estimates on the Wasserstein contraction of the generalized Hit-and-Run sampler guaranteeing a rate given by
\[ \rho\ =\ \frac12\inf_{|\zeta|=1}\mathbb E_{v\sim\tau}\Bigr(\zeta\cdot\frac{\C^{-1/2}v}{|\C^{-1/2}v|}\Bigr)^2 \]
which, when combined with a one-step overlap bound, yields a bound on the total variation mixing time.  Importantly, our coupling-based approach does not require the initial distribution to be warm.  

Table \ref{tab:lowdimresults} summarizes the resulting rates for Hit-and-Run, i.e., $\tau=\Unif(\mathbb S^{d-1})$, in various low-dimensional cases.  Notably, our results rigorously explain the empirically observed ballistic speed-up in dimension two \cite{ChenSchmeiser1993}.  Beyond the bivariate case, we show that this speed-up occurs only if the number of high modes with respect to $\C^{-1}$ is sufficiently small relative to the number of low modes.

\begin{table}
    \centering
    \begin{tabular}{|lllc|}
    \hline
    $d$ &  \multicolumn{1}{c}{$\C$} & \multicolumn{1}{c}{$\rho\asymp$} & Speed of Mixing\\
    \hline
    2   &  $\diag(\kappa,1)$        & $\kappa^{-1/2}$            &ballistic \\
    3   &  $\diag(\kappa,\kappa,1)$ & $\kappa^{-1/2}$            &ballistic \\
    3   &  $\diag(\kappa,1,1)$      & $\kappa^{-1}\log\kappa$    & superdiffusive \\
    4   &  $\diag(\kappa,1,1,1)$    & $\kappa^{-1}$              &diffusive \\
    \hline
    \end{tabular}
    \caption{\textit{Mixing speed and Wasserstein contraction rate $\rho$ for Hit-and-Run for low-dimensional Gaussian target measures $\mathcal N(0,\C)$ with varying combinations of low ($\kappa^{-1} \ll 1$) and high modes with respect to $\C^{-1}$, see Section \ref{sec:ballistic}.  The resulting mixing time bounds are of order $\rho^{-1}$, up to logarithmic factors.  The symbol $\asymp$ indicates quantities of the same order, see \eqref{eq:asymp}.  Note that these bounds hold for any rotation of the given Gaussians, reflecting Hit-and-Run's invariance under rotations.  In particular, the first row quantifies the ballistic mixing of Hit-and-Run illustrated in Figure \ref{fig:HRvsGibbs}.}}
    \label{tab:lowdimresults}
\end{table}

\paragraph{Main result for coordinate-free randomized Kaczmarz}  
Furthermore, we extend  our findings  on coordinate-free Monte Carlo methods to analogous optimization methods by examining the randomized Kaczmarz algorithm \cite{vershynin09}.  While the analogy between Monte Carlo and optimization methods is well established in the context of Gibbs samplers \cite{roberts1997updating}, it is less well-known for Hit-and-Run.  Moreover, the advantages of coordinate-free approaches, to our knowledge, has not been rigorously quantified.  Remarkably, the general contraction lemma for random projections formalized in Lemma \ref{lem:contrproj}, naturally applies to the convergence analysis of these methods.   

The randomized Kaczmarz algorithm  is an iterative method for approximately solving overdetermined linear systems $Ax=b$.  It works by iteratively projecting onto solution hyperplanes $\{e_i\cdot(Ax-b)=0\}$ of randomly chosen equations from the system, where $e_i$ denotes the $i$-th canonical basis vector.  This algorithm is by construction restricted to the coordinate directions defined by the canonical basis of $\mathbb{R}^d$, making it analogous to random-scan Gibbs samplers.

Motivated by this restriction to particular coordinate directions, we introduce a \emph{coordinate-free randomized Kaczmarz algorithm} which projects onto solution hyperplanes \[ \{v\cdot(Ax-b)=0\} \;, \quad  v\sim\Unif(\mathbb S^{d-1}) \;,
\] which are random linear combinations of the system's equations.  Extending this further, we consider the more general class of \emph{generalized randomized Kaczmarz algorithms}, which allow for an arbitrary distribution $\tau$ on the unit sphere.  For this generalization, we prove that the mean error decays at the rate
\[ \rho\ =\ \frac12\inf_{|\zeta|=1}\mathbb E_{v\sim\tau}\Bigr(\zeta\cdot\frac{A^Tv}{|A^Tv|}\Bigr)^2\;. \]

This result follows from applying Lemma \ref{lem:contrproj}, by interpreting the error as the difference between two copies, one of which is at the solution. In each iteration, the difference is projected onto a random $(d-1)$-dimensional linear subspace, enabling the application of the general contraction lemma for random projections given below.  As a special case, our result recovers the diffusive convergence rate established in \cite{vershynin09} for the coordinate-bound variant.  Moreover, the ideas developed for Hit-and-Run transfer seamlessly to the randomized Kaczmarz algorithm, revealing the potential for a diffusive-to-ballistic speed-up in its coordinate-free variant.

\paragraph{General contraction lemma for random projections}
We now state and prove a central lemma that quantifies the average contraction resulting from random projections onto $(d-1)$-dimensional linear subspaces of $\mathbb R^d$.  This lemma plays a crucial role in the convergence analyses of both Hit-and-Run and coordinate-free randomized Kaczmarz, as illustrated in Figure \ref{fig:useofprojection}.

\begin{figure}[t]
\noindent\begin{minipage}[b]{.48\textwidth}
\flushleft
\begin{tikzpicture}[scale=1.3]
\clip (-3,-2.5) rectangle (3,2.5);
\draw[line width=1pt] (-2,2) -- (2,-2) node[pos=0.1, above right] {$\{v\cdot\C^{-1/2}x=0\}$};
\draw[line width=1pt,dashed] (0,0) -- (-1.5,-1.5);
\filldraw (0,0) circle (2.5pt) node[above right] {$\C^{-1/2}(X_{k+1}-\tilde X_{k+1})$};
\filldraw (-1.5,-1.5) circle (2.5pt) node[below] {$\C^{-1/2}(X_k-\tilde X_k)$};
\end{tikzpicture}
\caption*{\textit{(a) Difference between components of a synchronous coupling of two copies of Hit-and-Run Monte Carlo. 
}}
\end{minipage}%
\hfill
\noindent\begin{minipage}[b]{.48\textwidth}
\centering
\begin{tikzpicture}[scale=1.3]
\clip (-3,-2.5) rectangle (3,2.5);
\draw[line width=1pt] (-2,2) -- (2,-2) node[pos=0.1, above right] {$\{v\cdot Ax=0\}$};
\draw[line width=1pt,dashed] (0,0) -- (-1.5,-1.5);
\filldraw (0,0) circle (2.5pt) node[above right] {$x_{k+1}-x^\ast$};
\filldraw (-1.5,-1.5) circle (2.5pt) node[below] {$x_k-x^\ast$};
\end{tikzpicture}
\caption*{\textit{(b) Difference between coordinate-free randomized Kaczmarz and the solution $x^\ast$ of the linear system.
}}
\end{minipage}%
\caption{\textit{Random projections onto hyperplanes (solid lines) defined using $v\sim\Unif(\mathbb S^{d-1})$.
}}
\label{fig:useofprojection}
\end{figure}

\medskip

\begin{lemma}\label{lem:contrproj}
Let $\eta$ be a probability measure on $\mathbb R^d$.  For $w\sim\eta$, define the random projection
\[ \Pi_w=I_d-\frac{w\otimes w}{|w|^2} \]
which projects any vector onto the orthogonal complement of $\operatorname{span}(w)$.  For all $z\in\mathbb R^d$, $\Pi_w$ satisfies
\begin{equation}\label{eq:generalrate}
    \mathbb E_{w\sim\eta}|\Pi_w\,z|^2\ \leq\ \Bigr(1-\inf_{|\zeta|=1}\mathbb E_{w\sim\eta}\Bigr(\zeta\cdot\frac{w}{|w|}\Bigr)^2\Bigr)\,|z|^2 \;. 
\end{equation}
\end{lemma}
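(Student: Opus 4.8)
The plan is to compute $\mathbb E_{w\sim\eta}|\Pi_w z|^2$ directly and then bound it by pulling out the worst-case direction. First I would expand the squared norm: since $\Pi_w$ is an orthogonal projection (in particular idempotent and self-adjoint), $|\Pi_w z|^2 = z\cdot\Pi_w z = |z|^2 - \frac{(w\cdot z)^2}{|w|^2} = |z|^2\bigl(1 - (\hat w\cdot\hat z)^2\bigr)$, where $\hat w = w/|w|$ and $\hat z = z/|z|$ (the case $z=0$ being trivial). Taking expectations gives
\[ \mathbb E_{w\sim\eta}|\Pi_w z|^2 \;=\; |z|^2\Bigl(1 - \mathbb E_{w\sim\eta}\bigl(\hat z\cdot\tfrac{w}{|w|}\bigr)^2\Bigr). \]

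Next I would observe that $\hat z$ is a fixed unit vector, so $\mathbb E_{w\sim\eta}\bigl(\hat z\cdot\frac{w}{|w|}\bigr)^2 \geq \inf_{|\zeta|=1}\mathbb E_{w\sim\eta}\bigl(\zeta\cdot\frac{w}{|w|}\bigr)^2$. Substituting this lower bound into the displayed identity yields exactly \eqref{eq:generalrate}. A minor point to handle: if $\eta$ places mass on $w=0$, the projection $\Pi_w$ is ill-defined; I would either assume $\eta(\{0\})=0$ or adopt the convention $\Pi_0 = I_d$ and note $w/|w|$ on the event $w=0$ can be taken as any fixed unit vector without affecting the inequality (since then $|\Pi_0 z|^2 = |z|^2$ matches the bound with the corresponding term contributing at most $1$ inside the infimum — one should instead simply restrict the convention so the stated inequality is not weakened; cleanest is to assume $\eta(\{0\})=0$, which is the only case used).

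This argument is essentially a two-line computation, so there is no real obstacle — the "hard part," such as it is, is merely being careful that the projection identity $|\Pi_w z|^2 = |z|^2 - (w\cdot z)^2/|w|^2$ is applied correctly and that the infimum over unit $\zeta$ is introduced in the right direction (we are upper-bounding $\mathbb E|\Pi_w z|^2$, hence we need a lower bound on the subtracted expectation, which is where the infimum legitimately enters). The strength of the lemma lies not in its proof but in its applicability: the same inequality feeds into both the synchronous-coupling analysis of generalized Hit-and-Run and the error analysis of generalized randomized Kaczmarz, with $w$ instantiated as $\C^{-1/2}v$ or $A^T v$ respectively.
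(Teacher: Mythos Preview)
Your proof is correct and follows essentially the same route as the paper: expand $|\Pi_w z|^2 = |z|^2\bigl(1-(\hat z\cdot w/|w|)^2\bigr)$, take expectations, and bound the subtracted term below by the infimum over unit vectors $\zeta$. The paper's version differs only cosmetically (it writes out $|z-(z\cdot w)w|^2$ for unit $w$ rather than invoking the projection property), and it does not comment on the $w=0$ issue you flag.
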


The expectation on the right-hand side of \eqref{eq:generalrate} measures the average contraction rate of a fixed vector $z$ under the random projection $\Pi_w$ for $w\sim\eta$.  Taking the infimum over all unit vectors $\zeta$ yields a global rate of average contraction for the random projection.

\begin{proof}[Proof of Lemma \ref{lem:contrproj}]
Let $z\in\mathbb R^d$.
For all unit vectors $w \in \mathbb{R}^d$, it holds
\[ |\Pi_w\,z|^2\ =\ \bigr|z-(z\cdot w)w\bigr|^2\ =\ \Bigr(1\ -\ \Bigr(\frac{z}{|z|}\cdot w\Bigr)^2\,\Bigr)\,|z|^2\;. \]
Using this, we can compute
\begin{align*}
    \mathbb E_{w\sim\eta}|\Pi_w\,z|^2\ &=\ \mathbb E_{w\sim\eta}|\Pi_{w/|w|}\,z|^2\ \leq\ \Bigr(1-\inf_{|\zeta|=1}\mathbb E_{w\sim\eta}\Bigr(\zeta\cdot\frac{w}{|w|}\Bigr)^2\,\Bigr)\,|z|^2\;.
\end{align*}
This completes the proof. \end{proof}



\paragraph{Organization of paper}
The remainder of the paper is organized as follows.  Section~\ref{sec:HR} introduces a generalized Hit-and-Run sampler, encompassing Hit-and-Run as a special case.  Sections~\ref{sec:wasserlower} and~\ref{sec:wasserupper} provide upper and lower bounds, respectively, on the Wasserstein contraction rate for this class of methods when applied to Gaussian target measures.  Section~\ref{sec:ballistic} discusses the rates achieved by Hit-and-Run.  Section~\ref{sec:mixing} combines the Wasserstein contraction results with a one-step overlap bound from Section~\ref{sec:overlap} to obtain mixing time upper bounds for Hit-and-Run.  Finally, Section~\ref{sec:kaczmarz} extends these  techniques to analyze the generalized randomized Kaczmarz algorithm.   




\subsection*{Acknowledgements}

We wish to acknowledge Ahmed Bou-Rabee, Bob Carpenter, Francis L\"orler, and Andre Wibosono for useful discussions.
N.~Bou-Rabee was partially supported by NSF grant  No.~DMS-2111224.
A.~Eberle and S.~Oberd\"orster were supported by the Deutsche Forschungsgemeinschaft (DFG, German Research Foundation) under Germany’s Excellence Strategy – EXC-2047/1 – 390685813.

\section{The generalized Hit-and-Run sampler}\label{sec:HR}

In this section, we present a rigorous definition of the Hit-and-Run transition kernel and subsequently extend it by allowing for arbitrary distributions of the directions.

Let $\mu$ be an absolutely continuous target probability measure on $\mathbb R^d$ with density with respect to Lebesgue measure also denoted by $\mu(x)$, $x\in\mathbb R^d$.  To describe the Hit-and-Run transition step rigorously, let $\mathbb S^{d-1}$ denote the unit sphere in $\mathbb{R}^d$, and define the line through $x \in \mathbb{R}^d$ in the direction $v \in \mathbb S^{d-1}$ as
\[ L(x,v) = \{  x + h \, v \mid h \in \mathbb{R} \} \;. \]
It corresponds to the image of the displacement map $\Delta_{x,v}:\mathbb R\to\mathbb R^d$, $\Delta_{x,v}(h)=x+hv$ which maps a scalar displacement $h$ to the corresponding point on the line.
Once a parametrization of the line is fixed, a regular version of the conditional distribution of $\mu$ given $L(x,v)$ is the push forward
\[ \mu(\,\cdot\,|L(x,v))\ =\ \mu_{x,v}\circ\Delta_{x,v}^{-1} \]
of the one-dimensional displacement measure $\mu_{x,v}$ on $\mathbb R$ defined via its almost everywhere finite density
\begin{equation}\label{eq:sdm}
    \mu_{x,v}(h)\ \propto\ \mu(x+hv) \;.
\end{equation}

A transition of Hit-and-Run proceeds as follows: Given the current state $x \in \mathbb{R}^d$, the direction $v\sim\Unif(\mathbb S^{d-1})$ defines a line through $x$ drawn uniformly at random.  The next state $X\sim\mu(\,\cdot\,|L(x,v))$ is realized as $X=\Delta_{x,v}(H)=x+Hv$ with a random displacement $H\sim\mu_{x,v}$.  See Figure \ref{fig:HR_transition_step}.

Note that the line $L(x,v)$ and correspondingly the distribution $\mu(\cdot|L(x,v))$ do not depend on the specific choice of anchor point $x$ on the line while the displacement map $\Delta_{x,v}$ and measure $\mu_{x,v}$ do.

\begin{algorithm}[H] 
\caption{Hit-and-Run $X\sim\pi_{\HR}(x,\cdot)$}\label{algo:HR}
\begin{description}
\item[(Hit Step)] Sample a direction $v \sim \Unif(\mathbb S^{d-1})$.
\item[(Run Step)] Sample a scalar displacement $H \sim \mu_{x,v}$.
\item[(Output)] Return $X = x + H \, v$.
\end{description}
\end{algorithm}

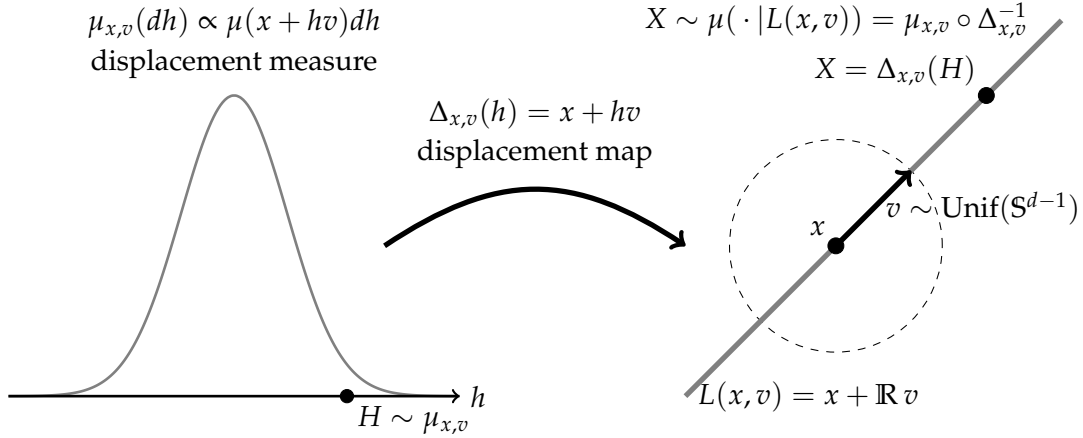
\begin{figure}[t]
\centering
\begin{tikzpicture}

\clip (0,0.5) rectangle (16,7);

\draw [gray,line width=1pt,smooth,samples=100,domain=0:6] plot({\x+1},{4*e^(-(\x-3)^2)+1});
\draw[line width=1pt,->] (1,1) -- (7,1) node[at end, right]{$h$};
\node[align=center] at (4,5.7) {$\mu_{x,v}(dh)\propto\mu(x+hv)dh$\\displacement measure};
\filldraw (5.5,1) circle (2.5pt) node[below right]{$H\sim\mu_{x,v}$};

\draw[dashed] (12,3) circle ({sqrt(2)});
\draw[gray, line width=2pt] (10,1) -- (15,6) node[black,at start, right] {$L(x,v)=x+\mathbb R\,v$};
\filldraw (12,3) circle (3pt) node[above left]{$x$};
\draw[line width=2pt, ->] (12, 3) -- (13, 4) node[pos=0.5, right] {$v\sim\Unif(\mathbb{S}^{d-1})$};
\filldraw (14,5) circle (3pt) node[above left]{$X=\Delta_{x,v}(H)$};
\node[align=center] at (12,6) {$X\sim\mu(\,\cdot\,|L(x,v))=\mu_{x,v}\circ\Delta_{x,v}^{-1}$};

\draw[line width=2pt, ->] (6,3) .. controls (7.5,4) and (8.5,4) .. (10,3);
\node[align=center] at (8,4.5) {$\Delta_{x,v}(h)=x+hv$\\displacement map};

\end{tikzpicture}

\caption{\textit{
Transition step of Hit-and-Run from current state $x$ to next state $X$.
}}
\label{fig:HR_transition_step}
\end{figure}

Let $\sigma_{d-1}=\Unif(\mathbb S^{d-1})$ be the normalized Haar measure on $\mathbb S^{d-1}$.  The transition kernel of Hit-and-Run is given by 
\begin{equation}\label{eq:H&Rkernel}
\pi_{\HR}(x,A)\ =\ \int_{\mathbb{S}^{d-1}}\int_{\mathbb{R}} 1_{A} \left(x + h v \right) \, \mu_{x,v}(dh) \, \sigma_{d-1}(dv)   
\end{equation}
for $x\in\mathbb R^d$ and measurable sets $A \subset \mathbb{R}^d$.
Changing from spherical to Cartesian coordinates yields the equivalent representation
\begin{equation}\label{eq:HRcart}
\pi_{\HR}(x,dy) \ = \  1_{ \{ y \ne x \} } \, \frac{2}{ a_{d-1}}  \, \frac{1}{\left|y-x \right|^{d-1}} \, \mu_{x,\frac{y-x}{|y-x|}}\bigr(|y-x|\bigr)  \,  dy
\end{equation}
where $a_{d-1}$ denotes the surface area of $\mathbb{S}^{d-1}$.  Similar representations appear in \cite{ChenSchmeiser1993,chen2022hit}.  This representation will be used to quantify the overlap between the transition distributions of two copies of Hit-and-Run starting at different points, see Lemma \ref{lem:overlap1}.

By allowing for more general probability distributions $\tau$ on $\mathbb S^{d-1}$ in the \emph{Hit Step}, we can define the generalized Hit-and-Run sampler with transition kernel
\begin{equation}\label{eq:gGibbskernel}
\pi_{\mathrm{\gHR}}(x,A)\  =\ \int_{\mathbb{S}^{d-1}}\int_{\mathbb{R}} 1_{A} \left(x + h v \right) \, \mu_{x,v}(dh) \, \tau(dv)\;.
\end{equation}
This generalized Hit-and-Run sampler encompasses not only Hit-and-Run for $\tau = \Unif(\mathbb S^{d-1})$, but also the random-scan Gibbs sampler corresponding to $\tau=\frac1d\sum_{i=1}^d\delta_{e_i}$ where $\{e_i\}_{1 \le i \le d}$ is the canonical basis of $\mathbb R^d$.  In the latter case, $\tau$ equals the uniform distribution over the $d$ coordinate directions.  The next theorem ensures reversibility of this general class of samplers with respect to the target distribution.

\medskip

\begin{theorem}\label{thm:rev}
The transition kernel $\pi_{\gHR}$ of generalized Hit-and-Run is reversible with respect to $\mu$. 
\end{theorem}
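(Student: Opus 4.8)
The plan is to verify the detailed balance condition $\mu(dx)\,\pi_{\gHR}(x,dy) = \mu(dy)\,\pi_{\gHR}(y,dx)$ directly from the definition \eqref{eq:gGibbskernel}, by exploiting the symmetry built into the one-dimensional conditional (displacement) measures. The key observation is that for a fixed direction $v$, the line $L(x,v)$ through $x$ coincides with the line $L(y,v)$ through any other point $y$ on it, and the conditional distribution $\mu(\cdot\,|L(x,v))$ is an honest probability measure on that line that depends only on the line, not on the anchor. So the generalized Hit-and-Run step, conditioned on the direction $v$, is exactly a Gibbs-type update onto a one-dimensional affine subspace, and such conditional-distribution updates are automatically reversible with respect to $\mu$; one then averages over $v\sim\tau$.

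More concretely, the first step is to pass to a convenient joint representation. Fix a test pair of bounded measurable functions (or indicator sets) and write
\[ \int f(x)g(y)\,\mu(dx)\,\pi_{\gHR}(x,dy) = \int_{\mathbb S^{d-1}}\int_{\mathbb R^d}\int_{\mathbb R} f(x)\,g(x+hv)\,\mu_{x,v}(dh)\,\mu(dx)\,\tau(dv). \]
For fixed $v$, I would disintegrate $\mu(dx)$ along the foliation of $\mathbb R^d$ by lines parallel to $v$: writing $x = z + sv$ with $z$ ranging over the orthogonal complement $v^\perp$ and $s\in\mathbb R$, Lebesgue measure factors as $dx = dz\,ds$, and the inner double integral over $(x,h)$ becomes, for each fixed $z$, an integral over $(s,h)\in\mathbb R^2$ against $\mu(z+sv)\,\mu_{z+sv,v}(dh)\,ds$. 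By the definition \eqref{eq:sdm} of the displacement measure, $\mu_{z+sv,v}(dh) = \mu(z+sv+hv)\,dh \big/ \int_{\mathbb R}\mu(z+tv)\,dt$, and crucially the normalizing constant $c(z,v) := \int_{\mathbb R}\mu(z+tv)\,dt$ depends only on the line (i.e.\ on $z$ and $v$), not on the anchor $s$. Hence the inner integral equals
\[ \int_{\mathbb R}\int_{\mathbb R} f(z+sv)\,g(z+(s+h)v)\,\frac{\mu(z+sv)\,\mu(z+(s+h)v)}{c(z,v)}\,dh\,ds, \]
which, after the substitution $t = s+h$, is manifestly symmetric under swapping $f\leftrightarrow g$ (equivalently $s\leftrightarrow t$). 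Integrating back over $z\in v^\perp$ and then over $v\sim\tau$ preserves this symmetry, giving $\int f(x)g(y)\,\mu(dx)\,\pi_{\gHR}(x,dy) = \int g(x)f(y)\,\mu(dx)\,\pi_{\gHR}(x,dy)$, which is detailed balance.

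The main obstacle is not conceptual but measure-theoretic hygiene: I must ensure the disintegration $dx = dz\,ds$ and the ensuing applications of Tonelli/Fubini are justified (nonnegativity of integrands handles this for the core identity), and — more delicately — that the displacement density $\mu_{x,v}(h)\propto\mu(x+hv)$ is well defined, i.e.\ that $0 < c(z,v) < \infty$ for $\tau$-almost every $v$ and Lebesgue-almost every $z\in v^\perp$. For a density $\mu$ on $\mathbb R^d$ this holds for a.e.\ line by Fubini, and the set of lines where it fails is $\mu$-null, so it does not affect the identity; I would state this as the standing regularity assumption already implicit in \eqref{eq:sdm}. A secondary point worth a sentence is that one should also check $\pi_{\gHR}$ is a genuine Markov kernel (each $\pi_{\gHR}(x,\cdot)$ is a probability measure), which is immediate since $\mu_{x,v}$ and $\tau$ are probability measures and the map $(v,h)\mapsto x+hv$ is measurable. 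With these points addressed, reversibility follows, and as an immediate corollary $\mu$ is stationary for $\pi_{\gHR}$.
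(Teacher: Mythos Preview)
Your proof is correct and essentially the same as the paper's: both verify detailed balance by writing out the joint measure $\mu(dx)\,\pi_{\gHR}(x,dy)$ with the explicit density, using that the normalizer $\int_{\mathbb R}\mu(x+sv)\,ds$ depends only on the line, and then performing a change of variables to expose the $x\leftrightarrow y$ symmetry. The paper does this via the translation $x\mapsto x-hv$ in $\mathbb R^d$ followed by $h\mapsto -h$, whereas you first foliate $x=z+sv$ with $z\in v^\perp$ and then substitute $t=s+h$; these are two bookkeepings of the same computation, with your version making the line-dependence of the normalizer and the Fubini/well-definedness issues more explicit.
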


\begin{proof}
Since $\mu_{x,v}(dh)=\frac{\mu(x+hv)}{\int_{\mathbb R}\mu(x+sv)ds}dh$ for all $x,v\in\mathbb R^d$ by \eqref{eq:sdm}, inserting \eqref{eq:H&Rkernel} and changing variables twice shows for any measurable sets $A, B \subset \mathbb{R}^d$, \begin{align*}
  \int_A  \pi_{\gHR}(x,B)  \mu(dx)
  &=\int_{\mathbb{R}^d}\int_{\mathbb{S}^{d-1}}\int_{\mathbb{R}}  1_{A}(x) \, 1_B(x + h v) \, \mu_{x,v}(dh) \, \tau(dv) \, \mu(dx)  \\
  &=  \int_{\mathbb{R}^d}\int_{\mathbb{S}^{d-1}}\int_{\mathbb{R}}1_{A}(x) \,  1_B(x + h v) \,  \frac{\mu(x) \,  \mu(x+ h v )} {\int_{\mathbb{R}} \mu(x+s v) ds } \, dh \, \tau(dv) \, dx \\ 
  &=  \int_{\mathbb{R}^d}\int_{\mathbb{S}^{d-1}}\int_{\mathbb{R}}  1_{A}(x - h v) \,  1_B(x) \, \frac{\mu(x-hv) \,  \mu(x)} {\int_{\mathbb{R}} \mu(x+s v) ds } \, dh \, \tau(dv) \, dx \\
  &=  \int_{\mathbb{R}^d}\int_{\mathbb{S}^{d-1}}\int_{\mathbb{R}}  1_{A}(x + h v) \,  1_B(x) \, \frac{\mu(x+hv) \,  \mu(x)} {\int_{\mathbb{R}} \mu(x+s v) ds } \, dh \, \tau(dv) \, dx \\
  &=  \int_{\mathbb{R}^d}\int_{\mathbb{S}^{d-1}}\int_{\mathbb{R}} 1_{A}(x + h v) \,  1_B(x) \, \mu_{x,v}(dh) \, \tau(dv) \, \mu(dx)  \\
  & =   \int_B \pi_{\gHR}(x,A) \mu(dx) \;.
\end{align*}    
Thus, the kernel $\pi_{\gHR}$  is reversible with respect to $\mu$.
\end{proof}

\section{Generalized Hit-and-Run in Gaussian distributions}\label{sec:gGibbsinGaussians}

Consider the centered Gaussian target measure
\[ \gamma^{\C}\ =\ \mathcal{N}(0,\mathcal{C})\quad\text{on $\mathbb R^d$.} \]
Given a line $L(x,v)$, the corresponding scalar displacement measure, as defined in \eqref{eq:sdm}, is characterized by its density
\[ \begin{aligned}[t]
\gamma^\C_{x,v}(h)\ \propto\ \gamma^\C(x+hv)\ \propto\ e^{-\frac12|\C^{-1/2}v|^2\,h^2-x\cdot\C^{-1} v\,h}\ \propto\ \exp\Bigr(-\frac{|\C^{-1/2}v|^2}{2}\Bigr(h+\frac{x\cdot\C^{-1}v}{|\C^{-1/2}v|^2}\Bigr)^2\Bigr)\;.
\end{aligned} \]

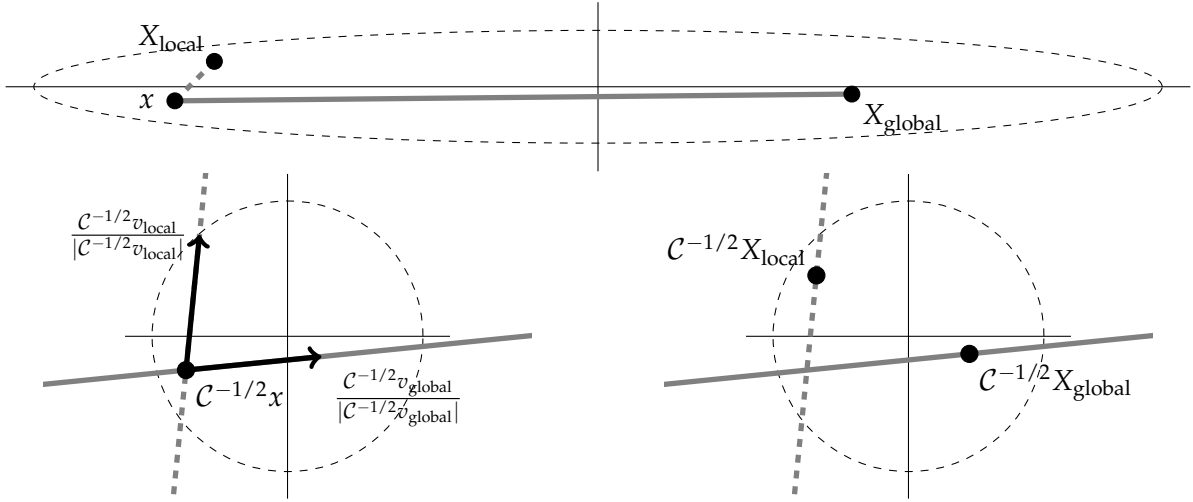
\begin{figure}[t]
\centering
\begin{tikzpicture}[scale=.75]
\clip (-10.5,-1.5) rectangle (10.5,1.5);
\draw (-10.5,0) -- (10.5,0);
\draw (0,-1.5) -- (0,1.5);
\draw[dashed] (0,0) ellipse (10 and 1);
\draw[gray,dashed,line width=2pt] (-7.5,-0.25) -- ({-7.5+0.7},{-0.25+0.7});
\filldraw ({-7.5+0.7},{-0.25+0.7}) circle (4pt) node[above left]{$X_{\mathrm{local}}$};
\draw[gray,line width=2pt] (-7.5,-0.25) -- ({-7.5+12},{-0.25+1.2/10});
\filldraw ({-7.5+12},{-0.25+1.2/10}) circle (4pt) node[below right]{$X_{\mathrm{global}}$};
\filldraw[black] (-7.5,-0.25) circle (4pt);
\node at (-8,-0.25) {$x$};
\end{tikzpicture}
\noindent\begin{minipage}{.5\textwidth}
\centering
\begin{tikzpicture}[scale=1.8]
\clip (-1.8,-1.2) rectangle (1.8,1.2);
\draw (-1.2,0) -- (1.2,0);
\draw (0,-1.2) -- (0,1.2);
\draw[dashed] (0,0) ellipse (1 and 1);
\draw [gray, dashed, line width=2pt] plot[variable=\t,domain=-1.5:1.5,smooth,thick] ({-.75+\t/10},{-0.25+\t});
\draw [gray, line width=2pt] plot[variable=\t,domain=-1.2:3,smooth,thick] ({-.75+\t},{-0.25+\t/10});
\filldraw[black] (-.75,-0.25) circle (1.8pt) node[below right] {$\C^{-1/2}x$};
\draw[line width=2pt,->] (-.75,-0.25) -- ({-.75+1/10},{-0.25+1}) node[left] {$\frac{\C^{-1/2}v_{\mathrm{local}}}{|\C^{-1/2}v_{\mathrm{local}}|}$};
\draw[line width=2pt,->] (-.75,-0.25) -- ({-.75+1},{-0.25+1/10}) node[below right] {$\frac{\C^{-1/2}v_{\mathrm{global}}}{|\C^{-1/2}v_{\mathrm{global}}|}$};
\end{tikzpicture}
\end{minipage}%
\noindent\begin{minipage}{.5\textwidth}
\centering
\begin{tikzpicture}[scale=1.8]
\clip (-1.8,-1.2) rectangle (1.8,1.2);
\draw (-1.2,0) -- (1.2,0);
\draw (0,-1.2) -- (0,1.2);
\draw[dashed] (0,0) ellipse (1 and 1);
\draw [gray, dashed, line width=2pt] plot[variable=\t,domain=-1.5:1.5,smooth,thick] ({-.75+\t/10},{-0.25+\t});
\draw [gray, line width=2pt] plot[variable=\t,domain=-1.2:3,smooth,thick] ({-.75+\t},{-0.25+\t/10});
\filldraw ({-0.75+0.7/10},{-0.25+0.7}) circle (1.8pt) node[above left]{$\C^{-1/2}X_{\mathrm{local}}$};
\filldraw ({-0.75+1.2},{-0.25+1.2/10}) circle (1.8pt) node[below right]{$\C^{-1/2}X_{\mathrm{global}}$};
\end{tikzpicture}
\end{minipage}%
\caption{\textit{Two realizations of a Hit-and-Run transition starting from $x$ with $\C=\diag(\kappa,1)$, $\kappa>1$, see \eqref{eq:HRstep}.  The directions $v_{\mathrm{local}}$ and $v_{\mathrm{global}}$ result in vastly different moves.
The distribution $\mathrm{Law}(\C^{-1/2}v/|\C^{-1/2}v|)$, for $v\sim\Unif(\mathbb S^1)$, favors directions that lead to local moves.  However, the probability of making a global move is of order $\kappa^{-1/2}$, see Figure \ref{fig:wlaw}, allowing Hit-and-Run to achieve ballistic mixing, as discussed in Section \ref{sec:ballistic}.}}
\label{fig:Gaussiantrans}
\end{figure}

Thus,
\begin{equation}\label{eq:sdmGaussian}
    \gamma^\C_{x,v}\ =\ \mathcal N\Bigr(-\frac{x\cdot\C^{-1}v}{|\C^{-1/2}v|^2},\,|\C^{-1/2}v|^{-2}\Bigr)\;.
\end{equation}
For $Z \sim \mathcal{N}(0,1)$, the random variable   \begin{equation} \label{eq:rvH}
H\ =\ - \frac{x\cdot\C^{-1}v}{|C^{-1/2} v|^2} + \frac{Z}{|\mathcal{C}^{-1/2} v|}\quad\text{satisfies}\quad H\ \sim\ \gamma^\C_{x,v}\;.
\end{equation}
With $v\sim\tau$ independent of $Z$, a transition step $X\sim\pi_{\gHR}(x,\cdot)$ of generalized Hit-and-Run with invariant measure $\gamma^\C$ satisfies
\begin{equation}\label{eq:HRstep}
\begin{aligned}
\C^{-1/2}X\ &=\ \C^{-1/2}(x+Hv)\ =\ \Bigr(I_d-\frac{\C^{-1/2}v\otimes\C^{-1/2}v}{|\C^{-1/2}v|^2}\Bigr)\C^{-1/2}x\ +\ Z\frac{\C^{-1/2}v}{|\C^{-1/2}v|}\\ 
&=\ \Pi_{\C^{-1/2}v}\,\C^{-1/2}x\ +\ Z\frac{\C^{-1/2}v}{|\C^{-1/2}v|}
\end{aligned}
\end{equation}
where  $\Pi_w=I_d-|w|^{-2}w\otimes w$, $w\in\mathbb R^d$ is the projection onto the orthogonal complement of $\operatorname{span}(w)$, as defined in Lemma \ref{lem:contrproj}. 

In the natural coordinates $\C^{-1/2}x$, the target measure turns into the canonical Gaussian measure.  Generalized Hit-and-Run then replaces the component of $\C^{-1/2}x$ in the random direction $\C^{-1/2}v/|\C^{-1/2}v|$ by a sample from the canonical Gaussian.  Note that even in the case of Hit-and-Run, the distribution of $\C^{-1/2}v/|\C^{-1/2}v|$ is not uniform, unlike the distribution of $v$.  This is illustrated in Figure \ref{fig:wlaw}.  The transition via the natural coordinates is depicted in Figure \ref{fig:Gaussiantrans}.

\begin{figure}[t]
\centering
\noindent\begin{minipage}[b]{.3\textwidth}
\centering
\begin{tikzpicture}[scale=1.7]
\draw (-1.3,0) -- (1.3,0);
\draw (0,-1.3) -- (0,1.3);
\node at (0.15,0.2) {$\alpha$};

\draw (0.25,0.2) -- (0.4,0.05);
\draw[smooth,samples=10,domain=0:1] plot ({0.5*cos((\x/24)*2*pi r)},{0.5*sin((\x/24)*2*pi r)});
\draw[line width=1pt] (0,0) circle (1);
\draw ({cos((1/24)*2*pi r)},{sin((1/24)*2*pi r)}) -- (0,0);
\foreach \k in {0,...,23} {\filldraw[gray] ({cos((\k/24)*2*pi r)},{sin((\k/24)*2*pi r)}) circle (2pt);}
\filldraw[black] ({cos((1/24)*2*pi r)},{sin((1/24)*2*pi r)}) circle (2pt);
\end{tikzpicture}
\caption*{$\law(v)=\Unif(\mathbb S^1)$}
\end{minipage}%
\noindent\begin{minipage}[b]{.3\textwidth}
\centering
\begin{tikzpicture}[scale=1.7]
\draw (-1.3,0) -- (1.3,0);
\draw (0,-1.3) -- (0,1.3);
\draw[line width=1pt] (0,0) ellipse (1/5 and 1);
\draw ({cos((1/24)*2*pi r)/5},{sin((1/24)*2*pi r)}) -- (0,0);
\foreach \k in {0,...,23} {\filldraw[gray] ({cos((\k/24)*2*pi r)/5},{sin((\k/24)*2*pi r)}) circle (2pt);}
\filldraw[black] ({cos((1/24)*2*pi r)/5},{sin((1/24)*2*pi r)}) circle (2pt);
\end{tikzpicture}
\caption*{$\law(\C^{-1/2}v)$}
\end{minipage}%
\noindent\begin{minipage}[b]{.3\textwidth}
\centering
\begin{tikzpicture}[scale=1.7]
\draw (-1.3,0) -- (1.3,0);
\draw (0,-1.3) -- (0,1.3);
\draw[line width=1pt] (0,0) circle (1);
\node[right] at (0.15,0.15) {$\phi$};
\draw[smooth,samples=10,domain=0:1] plot ({0.5*((cos((\x/24)*2*pi r)/5)/sqrt(cos((\x/24)*2*pi r)^2/25 + sin((\x/24)*2*pi r)^2))},{0.5*(sin((\x/24)*2*pi r)/sqrt(cos((\x/24)*2*pi r)^2/25 + sin((\x/24)*2*pi r)^2))});
\draw ({(cos((1/24)*2*pi r)/5)/sqrt(cos((1/24)*2*pi r)^2/25 + sin((1/24)*2*pi r)^2)},{sin((1/24)*2*pi r)/sqrt(cos((1/24)*2*pi r)^2/25 + sin((1/24)*2*pi r)^2)}) -- (0,0);
\foreach \k in {0,...,23} {\filldraw[gray] ({(cos((\k/24)*2*pi r)/5)/sqrt(cos((\k/24)*2*pi r)^2/25 + sin((\k/24)*2*pi r)^2)},{sin((\k/24)*2*pi r)/sqrt(cos((\k/24)*2*pi r)^2/25 + sin((\k/24)*2*pi r)^2)}) circle (2pt);}
\filldraw[black] ({(cos((1/24)*2*pi r)/5)/sqrt(cos((1/24)*2*pi r)^2/25 + sin((1/24)*2*pi r)^2)},{sin((1/24)*2*pi r)/sqrt(cos((1/24)*2*pi r)^2/25 + sin((1/24)*2*pi r)^2)}) circle (2pt);
\end{tikzpicture}
\caption*{$\law(\C^{-1/2}v/|\C^{-1/2}v|)$}
\end{minipage}%
\caption{\textit{Transformation of $\mathrm{Law}(v)=\Unif(\mathbb S^1)$ used in Hit-and-Run under the change-of-coordinates $x \mapsto \C^{-1/2}x$ in the bivariate case with $\C=\diag(\kappa,1)$, $\kappa>1$.  The gray beads illustrate the redistribution of probability mass, which favors directions aligned with the high mode of $\C^{-1}$.  By elementary trigonometry, the angles $\alpha$ and $\phi(\alpha)$ in the original and transformed coordinates corresponding to the black bead are related by $\tan\phi(\alpha)=\kappa^{1/2}\tan\alpha$.  Hence, for fixed $\phi(\alpha)$ we have $\alpha\asymp\kappa^{-1/2}$, see \eqref{eq:asymp}.}}
\label{fig:wlaw}
\end{figure}

\subsection{Wasserstein contraction of generalized Hit-and-Run: Lower bound} \label{sec:wasserlower}

In the natural coordinates $\C^{-1/2}x$, the transition step consists of two distinct operations: a projection that removes the current state's component in direction $\C^{-1/2}v$, and a complete randomization of the component  using a standard Gaussian random variable $Z$, see \eqref{eq:HRstep}.  Importantly,  the auxiliary random variables $v$ and $Z$ are independent of the current state $x$. 

By synchronously coupling the auxiliary random variables in two copies of generalized Hit-and-Run, the transition of their difference reduces to a projection onto a random $(d-1)$-dimensional subspace:
\begin{equation}\label{eq:synchronous_coupling}
\mathcal{C}^{-1/2} (X - \tilde{X}) \ =\ \Pi_{\C^{-1/2}v}\,\mathcal{C}^{-1/2} (x-\tilde x) 
\end{equation}
where $x, \tilde{x} \in \mathbb{R}^d$, $X \sim \pi_{\gHR}(x, \cdot)$, $\tilde{X} \sim \pi_{\gHR}(\tilde{x}, \cdot)$ using the same $v\sim\tau$ and $Z\sim\mathcal N(0,1)$.  The law of $\C^{-1/2}v$ determines the contraction properties of this coupling, as quantified in the next lemma using the $L^2$-Wasserstein distance $\W^2_{\C^{-1/2}}$ with respect to the metric induced by $|x|_{\C^{-1/2}}=|\C^{-1/2}x|$. 

\medskip

\begin{lemma}\label{lem:HRcontr}
It holds
\begin{equation}\label{eq:Wcontr}
    \W^2_{\C^{-1/2}}\bigr(\pi_{\gHR}(x,\cdot),\, \pi_{\gHR}(\tilde x,\cdot)\bigr)\ \leq\ (1-\rho)|x-\tilde x|_{\C^{-1/2}}\quad\text{for all $x,\tilde x\in\mathbb R^d$}
\end{equation}
with
\begin{align} \rho\ =\ \frac12\inf_{|\zeta|=1}\mathbb E_{v\sim\tau}\Bigr(\zeta\cdot\frac{\C^{-1/2}v}{|\C^{-1/2}v|}\Bigr)^2\;. \label{eq:rate}
\end{align}
\end{lemma}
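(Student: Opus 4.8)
The plan is to combine the synchronous coupling identity \eqref{eq:synchronous_coupling} with the general contraction estimate of Lemma \ref{lem:contrproj}, obtaining the claimed bound by passing to the square root.

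First I would fix $x,\tilde x\in\mathbb R^d$ and make the coupling underlying \eqref{eq:synchronous_coupling} explicit. Draw one direction $v\sim\tau$ and one standard normal $Z\sim\mathcal N(0,1)$, independent of each other, and set $X=x+Hv$ and $\tilde X=\tilde x+\tilde Hv$ with $H=-\frac{x\cdot\C^{-1}v}{|\C^{-1/2}v|^2}+\frac{Z}{|\C^{-1/2}v|}$ and $\tilde H=-\frac{\tilde x\cdot\C^{-1}v}{|\C^{-1/2}v|^2}+\frac{Z}{|\C^{-1/2}v|}$. By \eqref{eq:rvH}, conditionally on $v$ one has $H\sim\gamma^\C_{x,v}$ and $\tilde H\sim\gamma^\C_{\tilde x,v}$, hence $X\sim\pi_{\gHR}(x,\cdot)$ and $\tilde X\sim\pi_{\gHR}(\tilde x,\cdot)$, so $(X,\tilde X)$ is an admissible coupling. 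Since the $Z$-term is shared by $H$ and $\tilde H$, it cancels in the difference, and \eqref{eq:HRstep} collapses to
\[ \C^{-1/2}(X-\tilde X)\ =\ \Pi_{\C^{-1/2}v}\,\C^{-1/2}(x-\tilde x)\;, \]
which, given $v$, is deterministic and independent of $Z$.

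Next I would take expectations and apply Lemma \ref{lem:contrproj} with $\eta=\law(\C^{-1/2}v)$ for $v\sim\tau$ and $z=\C^{-1/2}(x-\tilde x)$. Because $w=\C^{-1/2}v$ satisfies $w/|w|=\C^{-1/2}v/|\C^{-1/2}v|$, the lemma gives
\[ \mathbb E\,\bigl|\C^{-1/2}(X-\tilde X)\bigr|^2\ \leq\ \Bigl(1-\inf_{|\zeta|=1}\mathbb E_{v\sim\tau}\Bigl(\zeta\cdot\tfrac{\C^{-1/2}v}{|\C^{-1/2}v|}\Bigr)^2\Bigr)\,|x-\tilde x|_{\C^{-1/2}}^2\ =\ (1-2\rho)\,|x-\tilde x|_{\C^{-1/2}}^2 \]
with $\rho$ as in \eqref{eq:rate}. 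Since the $L^2$-Wasserstein distance $\W^2_{\C^{-1/2}}$ is the infimum of $\bigl(\mathbb E|\,\cdot\,-\,\cdot\,|_{\C^{-1/2}}^2\bigr)^{1/2}$ over all couplings, the synchronous coupling constructed above yields $\W^2_{\C^{-1/2}}\bigl(\pi_{\gHR}(x,\cdot),\pi_{\gHR}(\tilde x,\cdot)\bigr)\leq\sqrt{1-2\rho}\,|x-\tilde x|_{\C^{-1/2}}$, and \eqref{eq:Wcontr} follows from $\sqrt{1-2\rho}\leq 1-\rho$, i.e. $(1-\rho)^2\geq 1-2\rho$.

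The argument is essentially bookkeeping once Lemma \ref{lem:contrproj} is available, so no step is deep. The one point requiring care is verifying that the synchronous construction is a genuine coupling — that reusing the same $Z$ in both coordinates preserves the conditional marginals $\gamma^\C_{x,v}$ and $\gamma^\C_{\tilde x,v}$ — and that $\C^{-1/2}v\neq0$ for $\tau$-a.e.\ $v$, so that the displacement Gaussians in \eqref{eq:sdmGaussian} are well defined; both hold here because $\C$ is invertible and $v\in\mathbb S^{d-1}$. It is also worth recording that the superscript ``$2$'' in $\W^2_{\C^{-1/2}}$ indicates the order of the Wasserstein distance rather than a square, which is precisely why the clean contraction factor $(1-2\rho)$ at the level of squared distances becomes $(1-\rho)$ after the square root.
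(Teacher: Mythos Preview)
Your argument is correct and follows essentially the same route as the paper: construct the synchronous coupling, invoke Lemma~\ref{lem:contrproj} with $z=\C^{-1/2}(x-\tilde x)$ and $w=\C^{-1/2}v$, and pass from the squared-distance bound $(1-2\rho)$ to $(1-\rho)$ via $\sqrt{1-2\rho}\le 1-\rho$. Your write-up is in fact slightly more detailed than the paper's in making the coupling explicit and checking its marginals.
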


The lemma provides a lower bound for the global Wasserstein contraction rate of generalized Hit-and-Run, defined as the supremum over all $\rho$ satisfying \eqref{eq:Wcontr}.  This lower bound is sharp in the sense that any $\rho'>0$ such that \eqref{eq:Wcontr} holds with $\rho'$ instead of $\rho$, satisfies $\rho'\leq3\rho$, see Lemma \ref{lem:lowerbound} below.

A lower bound for the global Wasserstein contraction rate implies a lower bound for the coarse Ricci curvature of $(\mathbb R^d,|\cdot|_{\C^{-1/2}})$ equipped with generalized Hit-and-Run which has far-reaching consequences \cite{OLLIVIER2009,JOULIN2010}.  

If $\tau$ is symmetric with respect to reflection through the origin, i.e., $\tau(A)=\tau(-A)$ for all measurable $A\subset\mathbb S^{d-1}$, then  $\mathrm{Law}(\C^{-1/2}v/|\C^{-1/2}v|)$ inherits this symmetry. In this case, the contraction rate can be expressed as
\begin{equation}\label{eq:var}
    \rho\ =\ \frac12\inf_{|\zeta|=1}\operatorname{Var}_{v\sim\tau}\Bigr(\zeta\cdot\frac{\C^{-1/2}v}{|\C^{-1/2}v|}\Bigr)\;.
\end{equation}

\begin{proof}[Proof of Lemma \ref{lem:HRcontr}]
The proof uses Lemma~\ref{lem:contrproj}, the general contraction lemma for random projections.  Let $z=\C^{-1/2}(x-\tilde x)$ and consider a synchronous coupling $(X,\tilde X)$ of $\pi_{\gHR}(x, \cdot)$ and $\pi_{\gHR}(\tilde x, \cdot)$. From \eqref{eq:synchronous_coupling},
\[ |X-\tilde X|_{\C^{-1/2}}\ =\ |\C^{-1/2}(\tilde X-X)|\ =\ |\Pi_{\C^{-1/2}v}\,z| \]
so that by Lemma \ref{lem:contrproj},
\begin{align*}
    & \W^2_{\C^{-1/2}}\bigr(\pi_{\gHR}(x, \cdot),\,\pi_{\gHR}(\tilde x, \cdot)\bigr)\ \leq\ \sqrt{\mathbb E_{v\sim\tau}|X-\tilde X|_{\C^{-1/2}}^2}\ =\ \sqrt{\mathbb E_{v\sim\tau}|\Pi_{\C^{-1/2}v}\,z|^2} \\
    &\quad\leq\ \sqrt{1-\inf_{|\zeta|=1}\mathbb E_{v\sim\tau}\Bigr(\zeta\cdot\frac{\C^{-1/2}v}{|\C^{-1/2}v|}\Bigr)^2}|z|\ \leq\ \Bigr(1-\frac12\inf_{|\zeta|=1}\mathbb E_{v\sim\tau}\Bigr(\zeta\cdot\frac{\C^{-1/2}v}{|\C^{-1/2}v|}\Bigr)^2\Bigr)|x-\tilde x|_{\C^{-1/2}}\;.
\end{align*} \end{proof}

\subsection{Wasserstein contraction of generalized Hit-and-Run: Upper bound}\label{sec:wasserupper}

We now show the Wasserstein contraction rate obtained in Lemma \ref{lem:HRcontr} for generalized Hit-and-Run to be sharp up to a multiplicative constant.
By \eqref{eq:gGibbskernel}, the averaging operator associated with generalized Hit-and-Run takes the form
\begin{equation}\label{eq:avgop}
\pi_{\gHR} f(x)\ =\ \int_{\mathbb S^{d-1}}\int_{\mathbb R}f(x+hv)\gamma^{\C}_{x,v}(dh)\tau(dv)\;.
\end{equation}
The spectral radius of this operator acting on the orthogonal complement of the constant functions in $L^2(\gamma^\C)$ is
\begin{equation}\label{eq:specrad}
1-\lambda\ =\ \sup_{f\perp1,\,\|f\|_{L^2(\gamma^\C)=1}}\bigr\|\pi_{\gHR}f\bigr\|_{L^2(\gamma^\C)}\;.
\end{equation}

\begin{lemma}\label{lem:lowerbound}
Let $\rho$ be as defined in Lemma \ref{lem:HRcontr}.
It holds
\begin{equation}\label{eq:radbound}
    \lambda\ \leq\ 3\rho\;.
\end{equation}
In particular, any $\rho'>0$ such that
\begin{equation}\label{eq:contr'}
    \W^2_{\C^{-1/2}}\bigr(\pi_{\gHR}(x,\cdot),\, \pi_{\gHR}(\tilde x,\cdot)\bigr)\ \leq\ (1-\rho')|x-\tilde x|_{\C^{-1/2}}\quad\text{for all $x,\tilde x\in\mathbb R^d$}
\end{equation}
satisfies
\begin{equation}\label{eq:rho'bound}
    \rho'\ \leq\ 3\rho\;.
\end{equation}
\end{lemma}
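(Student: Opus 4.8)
The plan is to prove the two displayed inequalities in sequence: first establish the spectral-radius bound $\lambda \le 3\rho$, then deduce the Wasserstein consequence $\rho' \le 3\rho$ from it. For the first part, the natural strategy is to test the averaging operator $\pi_{\gHR}$ on a well-chosen family of functions that are orthogonal to the constants, and show that $\|\pi_{\gHR} f\|_{L^2(\gamma^\C)}$ is at least $(1-3\rho)\|f\|_{L^2(\gamma^\C)}$ for some such $f$. In the natural coordinates $y = \C^{-1/2}x$ where $\gamma^\C$ becomes standard Gaussian, a transition step acts as $y \mapsto \Pi_{\C^{-1/2}v}\, y + Z\,\C^{-1/2}v/|\C^{-1/2}v|$ with $v\sim\tau$, $Z\sim\mathcal N(0,1)$ independent, by \eqref{eq:HRstep}. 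The key observation is that for a linear test function $f(x) = \zeta\cdot\C^{-1/2}x$ (which is in $L^2$ of the standard Gaussian, orthogonal to constants, with unit norm when $|\zeta|=1$), the randomization term has mean zero, so
\[ \pi_{\gHR} f(x)\ =\ \mathbb E_{v\sim\tau}\bigl[\zeta\cdot\Pi_{\C^{-1/2}v}\,\C^{-1/2}x\bigr]\ =\ \zeta\cdot\bigl(I_d-\mathbb E_{v\sim\tau}\tfrac{\C^{-1/2}v\otimes\C^{-1/2}v}{|\C^{-1/2}v|^2}\bigr)\C^{-1/2}x\;. \]
Writing $Q = \mathbb E_{v\sim\tau}[\,\hat w\otimes\hat w\,]$ with $\hat w = \C^{-1/2}v/|\C^{-1/2}v|$, this is $f$ composed with the linear map $I_d - Q$ in natural coordinates, so $\|\pi_{\gHR} f\|_{L^2(\gamma^\C)} = |(I_d-Q)\zeta|$. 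Choosing $\zeta$ to be the top eigenvector of $I_d-Q$, i.e. the eigenvector of $Q$ with smallest eigenvalue $\lambda_{\min}(Q) = \inf_{|\zeta|=1}\zeta^T Q\zeta = \inf_{|\zeta|=1}\mathbb E_{v\sim\tau}(\zeta\cdot\hat w)^2 = 2\rho$, gives $\|\pi_{\gHR} f\|_{L^2} = 1 - 2\rho$, hence $1-\lambda \ge 1-2\rho$, i.e. $\lambda \le 2\rho \le 3\rho$. So in fact the linear test functions already give $\lambda \le 2\rho$; the factor $3$ is slack, which is reassuring.

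For the second part, I would argue that a Wasserstein contraction rate $\rho'$ as in \eqref{eq:contr'} forces $1-\rho'$ to be an upper bound on (a quantity controlling) the $L^2$ spectral gap, so that $1-\lambda \le 1-\rho'$, giving $\rho' \le \lambda \le 3\rho$. Concretely: for a reversible chain, $W^2_{\C^{-1/2}}$-contractivity with rate $\rho'$ means that for the Lipschitz function $f$ realizing the spectral radius — or rather, one passes through the known fact that $L^1$-Wasserstein contraction of a Markov kernel with rate $\rho'$ implies $\|\pi f - \pi f(\mu)\|_{\Lip} \le (1-\rho')\|f\|_{\Lip}$, and then relates this to the $L^2$ operator norm on linear functions, which are exactly the Lipschitz functions whose gradient is constant. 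Since for the linear test functions above one has $\|\pi_{\gHR} f\|_{L^2(\gamma^\C)} = |(I_d-Q)\zeta|$ and contraction bounds the $\C^{-1/2}$-Lipschitz seminorm of $\pi_{\gHR} f$ by $(1-\rho')$ times that of $f$, and the $\C^{-1/2}$-Lipschitz seminorm of $x\mapsto \zeta\cdot\C^{-1/2}x$ is $|\zeta|$ while that of its image $x\mapsto\zeta\cdot(I_d-Q)\C^{-1/2}x$ is $|(I_d-Q)\zeta|$, we get $|(I_d-Q)\zeta| \le (1-\rho')|\zeta|$ for all $\zeta$, hence $\lambda_{\min}(I_d-Q) \le 1-\rho'$... wait, I need the opposite direction. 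Taking $\zeta$ the top eigenvector of $I_d-Q$ gives $\|I_d-Q\|_{\mathsf{op}} = 1 - 2\rho \le 1-\rho'$, hence $\rho' \le 2\rho \le 3\rho$.

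The main obstacle I anticipate is making the link between Wasserstein contraction and the operator norm on linear functions fully rigorous — specifically, justifying that synchronous/optimal couplings let one push Lipschitz bounds through $\pi_{\gHR}$, and handling the centering (subtracting the mean) so that one stays in the orthogonal complement of constants. A clean route is to avoid the general Lipschitz machinery and instead compute directly: apply \eqref{eq:contr'} with $x = \C^{1/2}(t\zeta)$, $\tilde x = 0$ (using that $\gamma^\C$ is centered so $\pi_{\gHR}(0,\cdot)$ has a tractable mean), divide by $t$ and exploit that $\pi_{\gHR}$ maps the linear function $\zeta\cdot\C^{-1/2}x$ to $(I_d-Q)\zeta\cdot\C^{-1/2}x$ as shown above, so the $\C^{-1/2}$-Wasserstein distance between the two image measures is at least $|(I_d-Q)\zeta|\,t$ (bound the Wasserstein distance below by the distance between means, which is $\|\cdot\|_{\C^{-1/2}}$-isometric to the Euclidean distance between $\mathbb E[\C^{-1/2}X]$ and $\mathbb E[\C^{-1/2}\tilde X]$). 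Then \eqref{eq:contr'} yields $|(I_d-Q)\zeta| \le (1-\rho')|\zeta|$ for all $\zeta$, and taking $\zeta$ a bottom eigenvector of $Q$ gives $1-2\rho \ge \|I_d-Q\|_{\mathsf{op}} \ge 1-\rho'$, i.e. $\rho'\le 2\rho \le 3\rho$. I would present the argument in this direct form, and separately record the $\lambda\le 2\rho$ computation for the spectral-radius statement.
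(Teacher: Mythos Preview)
Your proof is correct. For the spectral bound $\lambda\le3\rho$, you use the same linear test functions $f_\zeta(x)=\zeta\cdot\C^{-1/2}x$ as the paper, but your route is cleaner: recognizing that $\pi_{\gHR}f_\zeta(x)=((I_d-Q)\zeta)\cdot\C^{-1/2}x$ with $Q=\mathbb E_{v\sim\tau}[\hat w\otimes\hat w]$, $\hat w=\C^{-1/2}v/|\C^{-1/2}v|$, you get $\|\pi_{\gHR}f_\zeta\|_{L^2(\gamma^\C)}=|(I_d-Q)\zeta|$ exactly, and hence the sharper $\lambda\le2\rho$. The paper instead expands $\|\pi_{\gHR}f_\zeta\|_{L^2}^2=1-2\,\zeta^TQ\zeta+|Q\zeta|^2$ and then replaces the last (nonnegative) term by the lower bound $-\zeta^TQ\zeta$ obtained via the triangle and Jensen inequalities, recovering only $1-3\,\zeta^TQ\zeta$.

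For the second assertion the approaches genuinely differ. The paper invokes the general Chen--Wang/Ollivier principle that a Wasserstein contraction rate lower-bounds the spectral gap, obtaining $\rho'\le\lambda\le3\rho$. Your direct argument---bounding $\W^2_{\C^{-1/2}}$ from below by the distance between means in natural coordinates, which equals $|(I_d-Q)\C^{-1/2}(x-\tilde x)|$, and then choosing $\C^{-1/2}(x-\tilde x)$ to be a bottom eigenvector of $Q$---is self-contained and again yields the sharper $\rho'\le2\rho$. Both routes are valid; yours avoids the external reference and improves the constant, while the paper's route makes explicit the link to coarse Ricci curvature.
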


Note that $\rho\leq\lambda$, since the lower bound $\rho$ on the coarse Ricci curvature of $(\mathbb R^d,\,|\cdot|_{\C^{-1/2}})$ equipped with the family of probability measures $(\pi_{\gHR}(x,\cdot))_{x\in\mathbb R^d}$, implied by Lemma \ref{lem:HRcontr} and $\W^1\leq\W^2$,  yields $1-\lambda\leq1-\rho$, see \cite{ChenWang1994,OLLIVIER2009}.  Similarly, assertion \eqref{eq:rho'bound} follows from \eqref{eq:radbound} as any $\rho'>0$ satisfying \eqref{eq:contr'} provides a lower bound $\rho'$ on the coarse Ricci curvature so that
\[ 1-3\rho\ \leq\ 1-\lambda\ \leq\ 1-\rho' \]
proving \eqref{eq:rho'bound}.

\begin{proof}
We are left to show \eqref{eq:radbound}.
Define
\[ f_\zeta(x)\ =\ \zeta\cdot\C^{-1/2}x\quad\text{for $|\zeta|=1$.} \]
Since $f_\zeta\in L^2(\gamma^\C)$ with $f_\zeta\perp1$ and $\|f_\zeta\|_{L^2(\gamma^\C)}=1$, by \eqref{eq:specrad},
\begin{equation}\label{eq:gap}
    1-\lambda\ \geq\ \sup_{|\zeta|=1}\bigr\|\pi_{\gHR}f_\zeta\bigr\|_{L^2(\gamma^\C)}\;.
\end{equation}
Inserting the definition of $f_\zeta$ into \eqref{eq:avgop} and using linearity as well as \eqref{eq:sdmGaussian} yields
\begin{align*}
\pi_{\gHR}f_\zeta(x)
\ &=\ f_\zeta(x)+\int_{\mathbb S^{d-1}}\int_{\mathbb R}h\,\gamma_{x,v}^\C(dh)f_\zeta(v)\tau(dv) \\
&=\ f_\zeta(x)-\int_{\mathbb S^{d-1}}\Bigr(\frac{\C^{-1/2}v}{|\C^{-1/2}v|^2}\cdot\C^{-1/2}x\Bigr)f_\zeta(v)\tau(dv)\;.
\end{align*}
With Fubini's theorem and the fact that
\[ \int_{\mathbb R^d} (a\cdot\C^{-1/2}x) \,(b\cdot\C^{-1/2}x) \,\gamma^\C(dx)\ =\ a\cdot b\quad\text{for all $a,b\in\mathbb R^d$,} \]
it then follows
\begin{align*}
&\bigr\|\pi_{\gHR}f_\zeta\bigr\|_{L^2(\gamma^\C)}^2
\ =\ \int_{\mathbb R^d}\bigr(\pi_{\gHR}f_\zeta(x)\bigr)^2\gamma^\C(dx) \\
&=\ \|f_\zeta\|_{L^2(\gamma^\C)}^2-2\int_{\mathbb S^{d-1}}\int_{\mathbb R^d}f_\zeta(x)\Bigr(\frac{\C^{-1/2}v}{|\C^{-1/2}v|^2}\cdot\C^{-1/2}x\Bigr)\gamma^\C(dx)\,f_\zeta(v)\tau(dv) \\
&\qquad+\int_{\mathbb S^{d-1}}\int_{\mathbb S^{d-1}}\int_{\mathbb R^d}\Bigr(\frac{\C^{-1/2}v}{|\C^{-1/2}v|^2}\cdot\C^{-1/2}x\Bigr)\Bigr(\frac{\C^{-1/2}v'}{|\C^{-1/2}v'|^2}\cdot\C^{-1/2}x\Bigr)\gamma^\C(dx)\,
f_\zeta(v')\tau(dv')\,f_\zeta(v)\tau(dv) \\
&=\ 1-2\int_{\mathbb S^{d-1}}\Bigr(\zeta\cdot\frac{\C^{-1/2}v}{|\C^{-1/2}v|^2}\Bigr)f_\zeta(v)\tau(dv) \\
&\qquad+\int_{\mathbb S^{d-1}}\int_{\mathbb S^{d-1}}\Bigr(\frac{\C^{-1/2}v}{|\C^{-1/2}v|^2}\cdot\frac{\C^{-1/2}v'}{|\C^{-1/2}v'|^2}\Bigr)f_\zeta(v')\tau(dv')\,f_\zeta(v)\tau(dv) \\
&\geq\ 1-3\int_{\mathbb S^{d-1}}\Bigr(\zeta\cdot\frac{\C^{-1/2}v}{|\C^{-1/2}v|}\Bigr)^2\tau(dv)
\end{align*}
where in the last step we used the definition of $f_\zeta$ together with the triangle and Jensen's inequality
\begin{align*}
&\left|\int_{\mathbb S^{d-1}}\int_{\mathbb S^{d-1}}\Bigr(\frac{\C^{-1/2}v}{|\C^{-1/2}v|^2}\cdot\frac{\C^{-1/2}v'}{|\C^{-1/2}v'|^2}\Bigr)f_\zeta(v')\tau(dv')\,f_\zeta(v)\tau(dv)\right| \\
&\quad \leq\ \int_{\mathbb S^{d-1}}\int_{\mathbb S^{d-1}}\frac{|f_\zeta(v')|}{|\C^{-1/2}v'|}\tau(dv')\,\frac{|f_\zeta(v)|}{|\C^{-1/2}v|}\tau(dv)
\ =\ \left(\int_{\mathbb S^{d-1}}\Bigr|\zeta\cdot\frac{\C^{-1/2}v}{|\C^{-1/2}v|}\Bigr|\tau(dv)\right)^2 \\
&\quad \leq\ \int_{\mathbb S^{d-1}}\Bigr(\zeta\cdot\frac{\C^{-1/2}v}{|\C^{-1/2}v|}\Bigr)^2\tau(dv)\;.
\end{align*}
By the elementary inequality $(1-\mathsf x)^{1/2}\geq1-\mathsf x$ for $\mathsf x\in[0,1]$, then
\begin{align*}
\bigr\|\pi_{\gHR}f_\zeta\bigr\|_{L^2(\gamma^\C)}\ &\geq\ \left(1-\min\left(1,\,3\int_{\mathbb S^{d-1}}\Bigr(\zeta\cdot\frac{\C^{-1/2}v}{|\C^{-1/2}v|}\Bigr)^2\tau(dv)\right)\right)^{1/2} \\
&\geq\ 1-3\int_{\mathbb S^{d-1}}\Bigr(\zeta\cdot\frac{\C^{-1/2}v}{|\C^{-1/2}v|}\Bigr)^2\tau(dv)\;.
\end{align*}

Inserting the last display into \eqref{eq:gap} yields
\begin{align*}
1-\lambda
\ \geq\ 1-3\inf_{|\zeta|=1}\int_{\mathbb S^{d-1}}\Bigr(\zeta\cdot\frac{\C^{-1/2}v}{|\C^{-1/2}v|}\Bigr)^2\tau(dv)
\ =\ 1-3\rho
\end{align*}
with $\rho$ as defined in Lemma \ref{lem:HRcontr}, showing \eqref{eq:radbound}.
\end{proof}

\subsection{Discussion of Wasserstein contraction of Hit-and-Run}\label{sec:ballistic}

In this section, we discuss the sharp contraction rates \eqref{eq:rate} resulting from Lemma \ref{lem:HRcontr} for Hit-and-Run, i.e., $\tau = \Unif(\mathbb S^{d-1})$, in various Gaussian targets.  
By rotation and rescaling, we can assume $\C$ to be a diagonal matrix with one mode having unit variance; e.g.,  in the bivariate case, we can take $\C=\diag(\kappa,1)$ where $\kappa\geq1$ is the condition number of the corresponding Gaussian measure.  The first four cases are summarized in Table~\ref{tab:lowdimresults}.

We say that two expressions in $\kappa$ are of the same order and denote
\begin{equation}\label{eq:asymp}
f(\kappa)\ \asymp\ g(\kappa)\quad\text{if and only if}\quad c\, g(\kappa') \leq f(\kappa') \leq C\,g(\kappa')
\end{equation}
for constants $k,c,C>0$ and all $\kappa'\geq k$.
We further denote $x\propto y$ if $x=Cy$ for a constant $C>0$.

\paragraph{Bivariate Case}
Consider $\C=\diag(\kappa,1)$, $\kappa\geq1$.
Since \eqref{eq:var} holds for Hit-and-Run, among all unit vectors $\zeta$, the smallest variance is achieved for $\zeta=e_1$, as illustrated in Figure \ref{fig:wlaw}.  Thus, the contraction rate takes the form
\begin{equation}\label{eq:rhoe1}
    \rho\ =\ \frac12\mathbb E_{v\sim\sigma_1}\Bigr(e_1\cdot\frac{\C^{-1/2}v}{|\C^{-1/2}v|}\Bigr)^2\;.
\end{equation}
In particular, $e_1$ is the direction of least contraction on average under the random projection $\Pi_{\C^{-1/2}v}$.

Let $v(\alpha)=(\cos\alpha,\sin\alpha)$ and denote by $\phi(\alpha)$ the angle between $e_1$ and the transformed vector $\C^{-1/2}v(\alpha)/|\C^{-1/2}v(\alpha)|$, as illustrated in Figure \ref{fig:wlaw}.  Then, due to the symmetry of the integral,
\[ \rho\ =\ \frac12\frac2\pi\int_0^{\pi/2}\Bigr(e_1\cdot\frac{\C^{-1/2}v(\alpha)}{|\C^{-1/2}v(\alpha)|}\Bigr)^2d\alpha\ =\ \frac1\pi\int_0^{\pi/2}\cos^2\phi(\alpha)d\alpha\;. \]
Inserting $\phi(\alpha)=\arctan(\kappa^{1/2}\tan\alpha)$, the contraction rate evaluates to
\[ \rho\ =\ \frac1\pi\int_0^{\pi/2}\frac{d\alpha}{\kappa\tan^2\alpha+1}\ =\ \frac12(\kappa^{1/2}+1)^{-1}\ \asymp\ \kappa^{-1/2}\;. \]
Thus, the contraction rate is of order $\kappa^{-1/2}$.  This demonstrates that Hit-and-Run achieves ballistic contraction for bivariate Gaussian targets.

The contraction rate of Hit-and-Run can be understood geometrically as follows.  For any fix $\phi(\alpha_{\mathrm{global}})\in(0,\pi/2)$, the directions in the general $d$-dimensional case can be partitioned into two sets:
\[ \mathcal V_{\mathrm{global}}\ =\ \bigr\{v\in\mathbb S^{d-1}:|\zeta\cdot v|\geq\cos\alpha_{\mathrm{global}}\text{ for some $\zeta\in\mathcal Z$}\bigr\}\quad\text{and}\quad\mathcal V_{\mathrm{local}}\ =\ \mathbb S^{d-1}\setminus\mathcal V_{\mathrm{global}} \]
where $\mathcal Z\subset\mathbb S^{d-1}$ is the collection of directions achieving the infimum in \eqref{eq:rate}.

In the bivariate case, $\mathcal Z=\{\pm e_1\}$ so that $\mathcal V_{\mathrm{global}}$ consists of two polar-opposite spherical caps containing all directions within a $\alpha_{\mathrm{global}}$ angle around $\pm e_1$.  As $\phi(\alpha_{\mathrm{global}})$ is fix, $\alpha_{\mathrm{global}}\asymp\kappa^{-1/2}$, see Figure \ref{fig:wlaw}.  These caps, as depicted in Figure \ref{fig:Gaussiantrans}, correspond to directions producing \emph{global moves}, characterized by their ability to advance proportionally to the scale of the target distribution, or equivalently of constant order in the natural coordinates, in both modes.

The probability of choosing a direction in $\mathcal V_{\mathrm{global}}$ is given by \begin{equation}\label{eq:globprob1}
    \Unif(\mathbb S^1)(\mathcal V_{\mathrm{global}})\ \propto\ \alpha_{\mathrm{global}}\ \asymp\ \kappa^{-1/2}\;.
\end{equation}
For directions $v\in\mathcal V_{\mathrm{global}}$, their contribution to the contraction rate \eqref{eq:rhoe1}
\[ \Bigr(e_1\cdot\frac{\C^{-1/2}v}{|\C^{-1/2}v|}\Bigr)^2\ \geq\ \cos^2\phi(\alpha_{\mathrm{global}}) \]
is bounded below resulting in the ballistic rate.  It is precisely these global moves that enable Hit-and-Run to contract ballistically.  On the other hand, directions in $\mathcal V_{\mathrm{local}}$ contribute only a diffusive rate.  In particular, for $\pi/2-\alpha_{\mathrm{global}}\leq1$, the contribution from $\mathcal V_{\mathrm{local}}$ is proportional to
\begin{equation}\label{eq:localcontri}
\int_{\alpha_{\mathrm{global}}}^{\pi/2}\frac{d\alpha}{\kappa\tan^2\alpha+1}\ \leq\ \int_{\alpha_{\mathrm{global}}}^{\pi/2}\frac{d\alpha}{\frac{\kappa}{4(\pi/2-\alpha)^2}+1}\ \leq\ \frac{\frac\pi2-\alpha_{\mathrm{global}}}{\frac{\kappa}{4(\pi/2-\alpha_{\mathrm{global}})^2}+1}\ \asymp\ \kappa^{-1}\Bigr(\frac\pi2-\alpha_{\mathrm{global}}\Bigr)^3\ \leq\ \kappa^{-1}\;.
\end{equation}
Thus, restricting to directions in $\mathcal V_{\mathrm{local}}$ yields only  a diffusive rate.

\paragraph{Three dimensions: One low mode}

Consider $\C=\diag(\kappa,1,1)$, i.e., a three-dimensional Gaussian with two high and one low mode with respect to $\C^{-1}$.  Due to rotational symmetry around $e_1$, Figure \ref{fig:wlaw} carries over to the distribution of directions on $\mathbb S^2$ so that $\mathcal Z$ remains $\{\pm e_1\}$ and
\[ \rho\ =\ \frac12\mathbb E_{v\sim\sigma_2}\Bigr(e_1\cdot\frac{\C^{-1/2}v}{|\C^{-1/2}v|}\Bigr)^2\;. \]
However, since the surface area of a spherical cap on $\mathbb S^2$ scales quadratically in its radius, in contrast to the linear scaling encountered in \eqref{eq:globprob1}, it holds
\[ \Unif(\mathbb S^2)(\mathcal V_{\mathrm{global}})\ \propto\ \alpha_{\mathrm{global}}^2\ \asymp\ \kappa^{-1} \]
suggesting global moves to appear too seldom for ballistic contraction.  Indeed, the rate computes to
\[ \rho\ =\ \frac12\frac2{4\pi}\int_0^{\pi/2}\cos^2\phi(\alpha)2\pi\sin\alpha d\alpha \ =\ \frac12\int_0^{\pi/2}\frac{\sin\alpha}{\kappa\tan^2\alpha+1}d\alpha\ \asymp\ \kappa^{-1}\log\kappa \]
uncovering a superdiffusive scaling.

\paragraph{Three dimensions: One high mode}

Replacing one high mode by another low mode in the previous case, i.e., considering $\C=\diag(\kappa,\kappa,1)$, yields a three-dimensional Gaussian with rotational symmetry around $e_3$.  Then, the set of directions achieving the infimum in \eqref{eq:rate} becomes $\mathcal Z=\{\zeta\in\mathbb S^2:\zeta\cdot e_3=0\}$, which corresponds to the equator of the sphere.  This expands $\mathcal V_{\mathrm{global}}$ to a neighborhood of the equator, for which, similarly to \eqref{eq:globprob1},
\[ \Unif(\mathbb S^2)(\mathcal V_{\mathrm{global}})\ \propto\ \alpha_{\mathrm{global}}\ \asymp\ \kappa^{-1/2}\;. \]
As a result, Hit-and-Run again achieves ballistic contraction due to global moves with rate
\[ \rho\ \asymp\ \kappa^{-1/2}\;. \]

\paragraph{Four dimensions: One low mode}

We found that in three dimensions with only one low mode, the contraction rate becomes superdiffusive.  To understand how this behavior changes with increasing dimension, we consider the four-dimensional Gaussian with one low mode, i.e., $\C=\diag(\kappa,1,1,1)$.  In this case, the infimum in the rate is again realized in $\zeta=e_1$. Using spherical coordinates, 
\begin{align*}
    \rho\ =\ \frac12\mathbb E_{v\sim\sigma_3}\frac{\kappa^{-1}v_1^2}{|v|^2-(1-\kappa^{-1})v_1^2}\ =\ \frac1\pi\int_0^\pi\frac{\kappa^{-1}\cos^2s\sin^2s}{1-(1-\kappa^{-1})\cos^2s}ds\ =\ \frac{\kappa^{-1}}{2(1+\kappa^{-1/2})^2}\ \asymp\ \kappa^{-1}\;.
\end{align*}
Thus, the contraction rate is diffusive.
This suggests that Hit-and-Run contracts diffusively in high-dimensional Gaussians when the number of low modes is small.

\paragraph{High dimension}
After understanding that Hit-and-Run contracts diffusively in low dimensions with too few low modes, we examine high-dimensional, two-scale Gaussians with $d_1$ low and $d_2$ high modes, i.e.,
\[ \C\ =\ \begin{pmatrix} \kappa I_{d_1} & 0 \\ 0 & I_{d_2}\end{pmatrix}\;. \]
The contraction rate in \eqref{eq:rate} can be expressed as
\[ \rho\ =\ \frac12\inf_{|\zeta|=1}\mathbb E_{v\sim\gamma_{d_1+d_2}}\Bigr(\zeta\cdot\frac{\C^{-1/2}v}{|\C^{-1/2}v|}\Bigr)^2\ =\ \frac12\inf_{|\zeta^1|^2+|\zeta^2|^2=1}\mathbb E_{v^1\sim\gamma_{d_1}}\mathbb E_{v^2\sim\gamma_{d_2}}\frac{(\kappa^{-1/2}\zeta^1\cdot v^1+\zeta^2\cdot v^2)^2}{\kappa^{-1}|v^1|^2+|v^2|^2} \]
where we replaced the expectation over $\sigma_{d_1+d_2-1}$ with an expectation over the $(d_1+d_2)$-dimensional canonical Gaussian measure $\gamma_{d_1+d_2}$ normalized to the unit sphere.
When $d_1$ and $d_2$ are large,  $|v^1|^2\approx d_1$ and $|v^2|^2\approx d_2$ with high probability for $v^1\sim\gamma_{d_1}$ and $v^2\sim\gamma_{d_2}$.  Substituting these approximations into the rate, we have
\begin{align*}
    \rho\ &\approx\ \frac12\inf_{|\zeta^1|^2+|\zeta^2|^2=1}\frac{\kappa^{-1}\mathbb E_{v^1\sim\gamma_{d_1}}(\zeta^1\cdot v^1)^2+\mathbb E_{v^2\sim\gamma_{d_2}}(\zeta^2\cdot v^2)^2}{\kappa^{-1}d_1+d_2}
    =\ \frac12\inf_{|\zeta^1|^2+|\zeta^2|^2=1}\frac{\kappa^{-1}|\zeta^1|^2+|\zeta^2|^2}{\kappa^{-1}d_1+d_2}\\
    &=\ \frac12\frac{\kappa^{-1}}{\kappa^{-1}d_1+d_2}
    \ =\ \frac12(d_1+\kappa d_2)^{-1}\;.
\end{align*}
We therefore expect Hit-and-Run to contract ballistically only in the regime $d_2=\mathcal O(\kappa^{-1/2}d_1)$, i.e., if the relative number of high modes $d_2$ is sufficiently small relative to the number of low modes $d_1$.

\subsection{Mixing of Hit-and-Run}\label{sec:mixing}

After proving and discussing the sharp Wasserstein contraction with rate $\rho$ of Hit-and-Run, we now combine this result with a one-step overlap bound to obtain a mixing time upper bound.  The following theorem states that Hit-and-Run mixes at a rate proportional to its Wasserstein contraction rate, i.e., the mixing time is of order $\rho^{-1}$ up to logarithmic factors.  Let $\mathcal P(\mathbb R^d)$ denote the collection of all probability measures on $\mathbb R^d$.

\medskip

\begin{theorem}
\label{thm:HRmixing}
Let $\eps>0$, $\nu\in\mathcal P(\mathbb R^d)$ and $\rho$ as defined in \eqref{eq:rate}.
Then, there exist absolute constants $C,C'>0$ such that the mixing time of Hit-and-Run with respect to the Gaussian measure $\gamma^\C$ starting from $\nu$ to accuracy $\eps$ satisfies
\begin{equation}\label{eq:thm_mix}
\begin{aligned}
    \tmix(\eps,\nu)\ &=\ \inf\bigr\{ n \in \mathbb{N}\,:\,\mathcal \TV\bigr(\nu\pi_{\HR}^n,\,\gamma^\C\bigr)\leq \eps \bigr\} \\
    &\leq\  C\,\rho^{-1}\,\log\Bigr(C'\max(\kappa,M,m^{-1})\,d\,\eps^{-1}\,\W^2_{\C^{-1/2}}(\gamma^{\C}, \nu)\Bigr)
\end{aligned}
\end{equation}
where $\kappa,M,m$ denote condition number, largest and smallest eigenvalue of $\C^{-1}$.
\end{theorem}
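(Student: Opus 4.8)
The plan is to convert the $\W^2_{\C^{-1/2}}$-contraction from Lemma~\ref{lem:HRcontr} into a total-variation mixing bound by the standard two-stage argument: first iterate the contraction for $n_1$ steps to bring the two copies Wasserstein-close, then use a one-step overlap (minorization) bound — the Lemma~\ref{lem:overlap1} alluded to after \eqref{eq:HRcart} — to convert proximity in Wasserstein distance into a total-variation coupling on the next step. Concretely, for an arbitrary coupling of $\nu$ and $\gamma^\C$ I would run the synchronous coupling of two Hit-and-Run chains; after $n$ steps Lemma~\ref{lem:HRcontr} and the tower property give $\W^2_{\C^{-1/2}}(\nu\pi_{\HR}^n,\gamma^\C)\le(1-\rho)^n\W^2_{\C^{-1/2}}(\nu,\gamma^\C)\le e^{-\rho n}\W^2_{\C^{-1/2}}(\nu,\gamma^\C)$, since $\gamma^\C$ is the invariant measure. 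Choosing $n_1\asymp\rho^{-1}\log(\cdots)$ makes the typical distance between the coupled states, measured in $|\cdot|_{\C^{-1/2}}$, as small as $\delta$ for any prescribed $\delta>0$, at the cost of a $\log(1/\delta)$ factor and a Markov-inequality loss.

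The second stage uses the one-step overlap bound: from the Cartesian representation \eqref{eq:HRcart} one shows that if $|x-\tilde x|_{\C^{-1/2}}$ is small (say $\le\delta$), then $\pi_{\HR}(x,\cdot)$ and $\pi_{\HR}(\tilde x,\cdot)$ have total-variation overlap at least $1-c(\kappa,M,m,d)\,\delta$ — the $1/|y-x|^{d-1}$ singularity and the dependence of $\mu_{x,v}$ on $x$ are controlled using the explicit Gaussian form \eqref{eq:sdmGaussian}, with the eigenvalue bounds $|x|\le M^{1/2}|x|_{\C^{-1/2}}$ and $|x|_{\C^{-1/2}}\le m^{-1/2}|x|$ (cf.\ \eqref{eq:equiv}) handling the change between the two norms, and $\kappa$ entering through the anisotropy of the direction distribution. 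Combining the two stages: run $n_1$ synchronous steps, then on step $n_1+1$ couple maximally; the total coupling failure probability, hence $\TV(\nu\pi_{\HR}^{n_1+1},\gamma^\C)$, is bounded by $\prb(|X_{n_1}-\tilde X_{n_1}|_{\C^{-1/2}}>\delta)+\ev{c(\kappa,M,m,d)\,\delta}$, and optimizing $\delta$ (taking $\delta$ of order $\eps/(c(\kappa,M,m,d))$ and $n_1$ of order $\rho^{-1}\log(\W^2_{\C^{-1/2}}(\gamma^\C,\nu)/(\eps\delta))$) yields \eqref{eq:thm_mix} with the stated logarithmic factor $\log(C'\max(\kappa,M,m^{-1})\,d\,\eps^{-1}\,\W^2_{\C^{-1/2}}(\gamma^\C,\nu))$.

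I expect the main obstacle to be the one-step overlap estimate, i.e.\ showing $\TV(\pi_{\HR}(x,\cdot),\pi_{\HR}(\tilde x,\cdot))\le 1-\Omega(1)$ once $|x-\tilde x|_{\C^{-1/2}}$ is below a threshold, with an explicit and not-too-bad dependence of the threshold on $\kappa,M,m,d$. The difficulty is that the Hit-and-Run kernel is genuinely non-Gaussian and concentrates mass near the current point through the $|y-x|^{-(d-1)}$ factor, so a naive bound on $\|\pi_{\HR}(x,\cdot)-\pi_{\HR}(\tilde x,\cdot)\|_{\TV}$ via pointwise density ratios must be done carefully — presumably by writing the overlap as an integral over directions $v$ and displacements, using that for each fixed line the conditional distributions \eqref{eq:sdmGaussian} of the two chains are one-dimensional Gaussians whose means and variances differ by $O(|x-\tilde x|)$ and $O(\cdot)$ respectively, and then integrating the resulting one-dimensional TV bounds against the (anisotropic) direction law. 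This is exactly the content I would isolate as Lemma~\ref{lem:overlap1} in Section~\ref{sec:overlap} and invoke as a black box here; the rest of the proof of Theorem~\ref{thm:HRmixing} is the routine geometric-decay-plus-minorization bookkeeping sketched above, together with the norm-equivalence constants to express everything in the intrinsic metric $|\cdot|_{\C^{-1/2}}$.
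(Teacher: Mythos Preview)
Your high-level two-stage strategy---iterate the $\W^2_{\C^{-1/2}}$-contraction of Lemma~\ref{lem:HRcontr}, then use a one-step overlap bound to pass to total variation---is exactly what the paper does. However, the technical execution in the paper differs from your sketch in several ways that are not merely cosmetic.

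First, the actual overlap bound (Lemma~\ref{lem:overlap1}) is \emph{not} of the clean form $\TV\le c(\kappa,M,m,d)\,\delta$ you anticipate. Because of the $|y-x|^{-(d-1)}$ singularity in the Cartesian density~\eqref{eq:HRcart}, the paper must introduce an auxiliary cut-off parameter $\epsilon$ and split the relative-entropy integral into near- and far-from-singularity parts; the result is a bound with three terms, one linear in $|x-\tilde x|_{\C^{-1/2}}$, one in its square root, and a residual $C_3(x)\epsilon^{1/2}$ that does not vanish with the distance. Moreover the constants $C_i(x)$ depend on $|x|_{\C^{-1/2}}$, not just on $\kappa,M,m,d$. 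The paper proves this via Pinsker and a direct relative-entropy computation in Cartesian coordinates---not by comparing one-dimensional conditionals on lines as you suggest (which would be awkward, since with synchronous directions the two chains lie on \emph{parallel but distinct} lines $L(x,v)\neq L(\tilde x,v)$).

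Second, because the overlap constants are $x$-dependent, the paper does not use a Markov-inequality ``good set / bad set'' split. Instead it proves a regularization lemma (Lemma~\ref{lem:regularization}) that passes from pointwise to distributional overlap via Cauchy--Schwarz, yielding $\TV(\gamma^\C\pi_{\HR},\nu\pi_{\HR})\le \sqrt2\bigl((\gamma^\C(C_1))^{1/2}\W^2_{\C^{-1/2}}+\dots\bigr)$, and then computes $\gamma^\C(C_i)$ using $\gamma^\C(|x|_{\C^{-1/2}}^2)=d$. The final step optimizes $\epsilon$ (as a function of the target accuracy $\eps$, not of the Wasserstein distance) and then chooses $n$.

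So your outline is correct in spirit and would ultimately arrive at the same kind of bound, but you would have to discover the $\epsilon$-splitting and the $x$-dependence of the overlap constants along the way, and your Markov-inequality bookkeeping would have to be upgraded to handle them; the paper's Cauchy--Schwarz route is the cleaner way to do this.
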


In Section~\ref{sec:overlap}, we prove a one-step overlap bound that essentially relates the total variation distance after a transition of Hit-and-Run to the Wasserstein distance prior to the transition.  Using this result, the total variation distance to stationarity after $n$ transitions can be controlled in terms of the Wasserstein distance to stationarity after $n-1$ transitions, which decays exponentially with rate proportional to $\rho$.  This leads to the mixing time upper bound stated in Theorem \ref{thm:HRmixing}, whose detailed proof is given at the end of Section \ref{sec:overlap}.  Similar approaches to bounding mixing times have been employed in previous works  \cite{roberts2002one,madras2010quantitative,EbMa2019,monmarche2021high, BouRabeeEberle2023,BouRabeeOberdoerster2024}.


\subsection{One-Step Overlap of Hit-and-Run}

\label{sec:overlap}

Here, we develop a total variation upper bound, which shows the transitions of Hit-and-Run in the target distribution $\gamma^\C$ to have a regularizing effect.  Even for the Gaussian target measure, the transitions of Hit-and-Run are not Gaussian, see Figure~\ref{fig:hr_densities}.  As a result, existing formulas cannot be directly applied, and new arguments must be developed.

\medskip

\begin{lemma} \label{lem:overlap1}
	For any $\epsilon \in (0,1)$ and  $x,\tilde x\in\mathbb R^d$,
    \[ \TV\bigr(\pi_{\HR}(x,\cdot),\,\pi_{\HR}(\tilde{x},\cdot)\bigr)
    \ \leq\ \sqrt 2\Bigr(C_1(x)^{1/2}\bigr|x-\tilde{x}\bigr|_{\C^{-1/2}} + C_2(x)^{1/2}\bigr|x-\tilde{x}\bigr|_{\C^{-1/2}}^{1/2} + C_3(x)\epsilon^{1/2}\Bigr) \]
    where we have introduced
    \begin{align*}
    C_1(x) \ &= \ \epsilon^{-1}m^{-1/2}|x|_{\C^{-1/2}} + 2\epsilon M^{1/2} + 1   \;, \\ 
    C_2(x) \ &= \ 2\epsilon^{-1}m^{-1/2}|x|_{\C^{-1/2}}^2 + 2|x|_{\C^{-1/2}} + \bigr(\epsilon^{-1}m^{-1/2} + 2\epsilon\kappa^{1/2}\bigr)(d-1) \\
    &\qquad+ 2\epsilon^{-1}m^{-1/2}+\bigr( M^{1/2} + m^{-1/2} +2 \bigr) \kappa^{1/2}  \;, \\ 
    C_3(x) \ &= \ \sqrt3M^{1/4}|x|_{\C^{-1/2}} + \sqrt2(1+\log\epsilon^{-1})^{1/2}M^{1/4}\sqrt{d-1} + \sqrt2M^{1/4}\bigr( M^{1/2} + m^{-1/2} +2 \bigr)^{1/2}   \;.
    \end{align*}
\end{lemma}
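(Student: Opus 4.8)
The plan is to bound the total variation distance via the Cartesian representation \eqref{eq:HRcart} of the Hit-and-Run kernel, comparing the two densities $\pi_{\HR}(x,\cdot)$ and $\pi_{\HR}(\tilde x,\cdot)$ pointwise. Writing $p_x(y)$ for the density in \eqref{eq:HRcart}, the strategy is to split $\log p_x(y) - \log p_{\tilde x}(y)$ into two contributions: the explicit factor $|y-x|^{-(d-1)}$ versus $|y-\tilde x|^{-(d-1)}$, and the one-dimensional Gaussian density factor $\gamma^\C_{x,v(y)}(|y-x|)$ versus $\gamma^\C_{\tilde x,v(\tilde y)}(|y-\tilde x|)$, where $v(y)=(y-x)/|y-x|$. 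Rather than working with TV directly, I would go through the Hellinger distance or, more conveniently, use the bound $\TV(\mu,\nu) \le \sqrt{2\,\KL(\mu\|\nu)}$ (Pinsker) — actually the cleaner route is $\TV \le \sqrt{\tfrac12 \chi^2}$ or a direct estimate $\TV(\mu,\nu)^2 \le \tfrac12 \int (\log\frac{d\mu}{d\nu})^2\, d\mu$ type inequality; the leading $\sqrt2$ in the statement and the quadratic-in-increment structure strongly suggest an $L^2(\mu)$-bound on $\log p_x - \log p_{\tilde x}$ is being invoked. So: \textbf{Step 1} is to fix the functional inequality reducing $\TV(\pi_{\HR}(x,\cdot),\pi_{\HR}(\tilde x,\cdot))$ to $\sqrt{\tfrac12\,\mathbb E_{Y\sim\pi_{\HR}(x,\cdot)}(\log p_x(Y) - \log p_{\tilde x}(Y))^2}$, up to the truncation parameter $\epsilon$ that handles the region where $Y$ is atypically close to $x$ (where the $\log$ blows up) — this is where the $C_3(x)\epsilon^{1/2}$ term comes from.

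\textbf{Step 2:} expand $\log p_x(Y) - \log p_{\tilde x}(Y)$ explicitly. Using \eqref{eq:sdmGaussian}, $-\log\gamma^\C_{x,v}(h)$ is, up to the $v$-dependent normalization $\tfrac12\log(|\C^{-1/2}v|^2/2\pi)$, equal to $\tfrac12|\C^{-1/2}v|^2 h^2 + (x\cdot\C^{-1}v)h + \tfrac12 (x\cdot\C^{-1}v)^2/|\C^{-1/2}v|^2$; after multiplying through by $v=(y-x)/|y-x|$ and $h=|y-x|$, the quadratic term becomes $\tfrac12|\C^{-1/2}(y-x)|^2$ and the cross term $x\cdot\C^{-1}(y-x)$, so the Gaussian exponent for $p_x$ is essentially $-\tfrac12|\C^{-1/2}y|^2 + \tfrac12|\C^{-1/2}x|^2$ — wait, that would make it $x$-independent up to a constant. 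The genuine $x$-dependence sits in (i) the normalization $\tfrac12\log|\C^{-1/2}v(y)|^2$ where $v(y)=(y-x)/|y-x|$, and (ii) the prefactor $|y-x|^{-(d-1)}$. So $\log p_x(y) - \log p_{\tilde x}(y) = (d-1)\log\frac{|y-\tilde x|}{|y-x|} + \tfrac12\log\frac{|\C^{-1/2}(y-\tilde x)|^2/|y-\tilde x|^2}{|\C^{-1/2}(y-x)|^2/|y-x|^2}$. \textbf{Step 3:} bound each piece. For the first, $|\log\frac{|y-\tilde x|}{|y-x|}| \le \frac{|x-\tilde x|}{\min(|y-x|,|y-\tilde x|)}$ by the mean value theorem applied to $\log$, which after the $\epsilon$-truncation of $\{|y-x|<\epsilon\cdot(\text{scale})\}$ gives a term of order $(d-1)\epsilon^{-1}(\text{scale})^{-1}|x-\tilde x|$ — the $(d-1)\epsilon^{-1}m^{-1/2}$-type terms in $C_1, C_2$. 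For the second, the function $w\mapsto \tfrac12\log(|\C^{-1/2}w|^2/|w|^2)$ is smooth on $\mathbb S^{d-1}$ with gradient controlled by $\kappa$ (ratio of eigenvalues of $\C^{-1}$), so its variation between directions $v(y)$ and $\tilde v(y)$ is bounded by $\kappa^{1/2}\cdot\angle(v(y),\tilde v(y)) \lesssim \kappa^{1/2}|x-\tilde x|/|y-x|$ — again yielding, after truncation and taking $L^2(\pi_{\HR}(x,\cdot))$ norm, the $\epsilon\kappa^{1/2}$ and $M^{1/2}$ contributions. \textbf{Step 4:} compute the moments of $Y\sim\pi_{\HR}(x,\cdot)$ that appear: $\mathbb E|Y|_{\C^{-1/2}}^2$, $\mathbb E|Y-x|_{\C^{-1/2}}^{-2}$ (this one needs care — it involves an inverse moment of the displacement, hence the $\epsilon$-truncation and the $\log\epsilon^{-1}$ appearing in $C_3$), and $\mathbb E\,\mathbf 1_{\{|Y-x|\text{ small}\}}$. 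Using \eqref{eq:HRstep}, $\C^{-1/2}Y = \Pi_{\C^{-1/2}v}\C^{-1/2}x + Z\,\C^{-1/2}v/|\C^{-1/2}v|$ with $Z\sim\mathcal N(0,1)$, so $|Y|_{\C^{-1/2}}^2 \le |x|_{\C^{-1/2}}^2 + Z^2$ and $\mathbb E|Y|_{\C^{-1/2}}^2 \le |x|_{\C^{-1/2}}^2 + 1$, while $|Y-x|_{\C^{-1/2}} = |H|\cdot|\C^{-1/2}v| \ge |Z|$ gives the inverse-moment and small-ball controls. Collecting all pieces and carefully tracking which are linear versus square-root in $|x-\tilde x|_{\C^{-1/2}}$ — the linear ones come from the smooth part of $\log$, the square-root ones from the combination of a linear bound with a second-moment bound on the singular $|Y-x|^{-1}$ weight via Cauchy–Schwarz — produces the three groups $C_1, C_2, C_3$.

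\textbf{Main obstacle.} The delicate point is controlling the inverse moments $\mathbb E_{Y\sim\pi_{\HR}(x,\cdot)}|Y-x|^{-2}$ and the near-diagonal singularity of the kernel: the density \eqref{eq:HRcart} has a non-integrable-looking $|y-x|^{-(d-1)}$ factor near $y=x$ (it is integrable because $dy$ contributes $|y-x|^{d-1}$ in spherical coordinates, but the $\log$-difference does not enjoy this cancellation), so the $\log$ of the ratio is genuinely unbounded near the diagonal. The fix — and the source of the $\epsilon$-dependent terms and the $\log\epsilon^{-1}$ — is to excise the event that $|Z|$ (equivalently the normalized displacement) is smaller than $\epsilon$, bound its probability and contribution crudely, and optimize over $\epsilon$ only at the point of application (in the proof of Theorem \ref{thm:HRmixing}). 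Getting the exponents of $\epsilon$, $m$, $M$, $\kappa$, and $d$ to match the stated $C_1, C_2, C_3$ requires bookkeeping but no new idea beyond Steps 1–4; I would organize it by first proving a clean master inequality $\TV \le \sqrt2\,(\,\mathbb E_{Y}[\,\mathbf 1_{|Z|\ge\epsilon}(\log p_x - \log p_{\tilde x})^2]^{1/2} + \mathbb P(|Z|<\epsilon)^{1/2}\,)$ and then estimating each expectation separately.
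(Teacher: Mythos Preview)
Your overall strategy---Pinsker plus an explicit decomposition of $\log p_x-\log p_{\tilde x}$ via the Cartesian density \eqref{eq:HRcart}, with an $\epsilon$-truncation near the diagonal---is exactly what the paper does. But two concrete points in your Steps 1--2 would prevent the argument from going through as written.

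\textbf{First, the functional inequality.} You settle on bounding $\TV$ by the $L^2(\pi_{\HR}(x,\cdot))$ norm of $\log p_x-\log p_{\tilde x}$. That is not Pinsker: Pinsker controls $\TV$ by $\sqrt{2\,\KL}$, i.e.\ the square root of the \emph{first} moment of the log-ratio, not the second. The paper uses genuine Pinsker and bounds $\KL$ directly; this is why the final bound has terms proportional to $|x-\tilde x|_{\C^{-1/2}}$ and $|x-\tilde x|_{\C^{-1/2}}^{1/2}$ (the latter arising from \emph{linear}-in-$|x-\tilde x|$ contributions to $\KL$ after taking the outer square root), not from a Cauchy--Schwarz split of an $L^2$ bound as you suggest in Step~4.

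\textbf{Second, a missing term in the log-density.} In Step~2 you correctly expand $-\log\gamma^\C_{x,v}(h)$ into three pieces, but after substituting $v=(y-x)/|y-x|$, $h=|y-x|$ you only carry the first two and conclude the exponent is $-\tfrac12|\C^{-1/2}y|^2+\tfrac12|\C^{-1/2}x|^2$. The third piece, $\tfrac12(x\cdot\C^{-1}v)^2/|\C^{-1/2}v|^2 = \tfrac12(x\cdot\C^{-1}(y-x))^2/|\C^{-1/2}(y-x)|^2$, does \emph{not} vanish and depends on both $x$ and $y$. Consequently your two-term decomposition of $\log p_x-\log p_{\tilde x}$ is incomplete: the paper's decomposition has four terms $\rn{1}+\rn{2}+\rn{3}+\rn{4}$, where $\rn{1}=\tfrac12|\C^{-1/2}x|^2-\tfrac12|\C^{-1/2}\tilde x|^2$ (the ``constant'' you dismiss) and $\rn{2}$ is precisely the difference of the squared-dot-product terms you dropped. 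Term $\rn{2}$ is not negligible---it supplies the $|x|_{\C^{-1/2}}^2$ contribution to $C_2(x)$ and part of $C_3(x)$, and its estimate requires the $\epsilon$-truncation just as much as $\rn{3}$ does. Without $\rn{1}$ and $\rn{2}$ you cannot recover the stated constants.

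A minor point: the paper truncates on $\{|y-x|>\epsilon\}$ in the $y$-variable rather than on $\{|Z|>\epsilon\}$; this makes the small-ball estimates \eqref{eq:Axepsic2b}--\eqref{eq:Axepsic4b} straightforward and avoids the issue that $|Y-x|=|H|$ is not simply $|Z|$.
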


The proof uses Pinsker's inequality, which states that for any probability measures $\nu ,\eta \in \mathcal{P}(\mathbb{R}^d)$,
\begin{equation}\label{eq:Pinsker}
    \TV(\eta,\,\nu)\ \leq\  \sqrt{2}\,\mathcal H(\eta|\nu)^{1/2} \;,
\end{equation}
where the relative entropy of $\eta$ with respect to $\nu$, $H(\eta\:|\:\nu)\in[0,\infty]$, is defined as
\begin{equation}\label{eq:relentropie}
\mathcal H(\eta\:|\:\nu) \ = \ 
\begin{cases}
\int\log\frac{d\eta}{d\nu}\: d\eta  & \text{if $\eta\ll\nu$,}\\
\infty & \text{otherwise.}
\end{cases}
\end{equation}

To compute the relative entropy between two Hit-and-Run transition kernels starting at different initial points, we use the transition kernel's representation \eqref{eq:HRcart} in Cartesian coordinates and insert \eqref{eq:sdmGaussian}, which yields
\begin{align}
& \pi_{\HR}(x,dy) \ = \ 1_{ \{ y \ne x \} } \, \frac{2}{ a_{d-1}}  \, \frac{1}{\left|y-x \right|^{d-1}} \, \gamma^{\C}_{x,\frac{y-x}{|y-x|}}(|y-x|)  \,  dy \nonumber \\
& \ = \ 1_{ \{ y \ne x \} } \, \frac{2}{ a_{d-1}} \, \frac{\left|\C^{-1/2} (y-x) \right|}{\sqrt{2 \pi} \left|y-x \right|^{d} } \, \exp\left( -\frac{\bigr|\C^{-1/2} y \bigr|^2}{2} + \frac{\bigr|\C^{-1/2} x \bigr|^2}{2} - \frac{(x \cdot \C^{-1} (y-x))^2}{2 \bigr|\C^{-1/2} (y-x) \bigr|^2} \right)\, dy\;.   \label{eq:xyHRkernel}
\end{align}

\begin{figure}
\centering
\includegraphics[width=\textwidth]{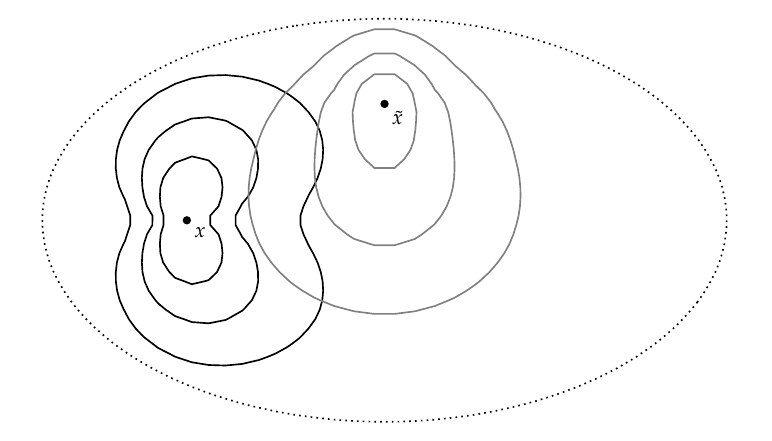}
\caption{\textit{This figure shows contour lines of the transition density of the Hit-and-Run kernel in \eqref{eq:xyHRkernel} starting from two different points: $x=(-2,0)$ (black) and $\tilde{x}=(0,1)$ (gray).  The covariance matrix is $\mathcal{C}=\diag(4,1)$. A contour line of the target density (dotted) is included for comparison.}} \label{fig:hr_densities}
\end{figure}

\begin{proof}
 Using \eqref{eq:xyHRkernel}, we  decompose the relative entropy of $\pi_{\HR}(x,\cdot)$ with respect to $\pi_{\HR}(\tilde{x},\cdot)$ as\begin{align}
 \mathcal H \bigr( & \pi_{\HR}(x,\cdot) |\pi_{\HR}(\tilde{x},\cdot)\bigr) \ = \ \rn{1} + \rn{2} + \rn{3} + \rn{4} \quad \text{where} \label{eq:re} \\
 \rn{1}  \ &= \ \frac{|\C^{-1/2} x |^2}{2} - \frac{|\C^{-1/2} \tilde{x} |^2}{2}  \;, \nonumber \\
 \rn{2}  \ &= \  \int_{\mathbb{R}^{d}} \left(\frac{(\tilde{x} \cdot \C^{-1} (y-\tilde{x}))^2}{2 \left|\C^{-1/2} (y-\tilde{x}) \right|^2} - \frac{(x \cdot \C^{-1} (y-x))^2}{2 \left|\C^{-1/2} (y-x) \right|^2} \right)  \pi_{\HR}(x,dy) \;, \nonumber \\
  \rn{3} \ &= \  (d -1) \int_{\mathbb{R}^{d}} \log\frac{|y-\tilde{x}|}{\left|y-x \right|} \, \pi_{\HR}(x,dy) \;, \nonumber \\
 \rn{4} \ &= \  \int_{\mathbb{R}^{d}}  \left( \log\frac{|\C^{-1/2}(y-x)|}{|y-x|} + \log\frac{\left|y-\tilde{x} \right|}{\left|\C^{-1/2} (y-\tilde{x}) \right|}   \right) \,  \pi_{\HR}(x,dy)  \;. \nonumber 
\end{align} 
The first term, $\rn{1}$, can be estimated directly using the  reverse triangle inequality:
\begin{align}
\rn{1}  \ &= \ \frac{1}{2} \bigr( |\C^{-1/2} x | + |\C^{-1/2} \tilde{x} | \bigr) \bigr(|\C^{-1/2} x | -  |\C^{-1/2} \tilde{x} | \bigr) \;,  \nonumber \\
&\leq \ \frac{1}{2} \Bigr( 2 | \C^{-1/2} x | + \bigr|\C^{-1/2} (x-\tilde{x}) \bigr| \Bigr)  \bigr|\C^{-1/2} (x-\tilde{x}) \bigr| \;. \label{eq:re1} 
\end{align} 
To bound the remaining terms in \eqref{eq:re}, we split each integral into two parts using an indicator function for the set $A_{x,\epsilon} = \{ y \in \mathbb{R}^d : |y-x| > \epsilon \} $ and its complement $A_{x,\epsilon}^c = \mathbb{R}^d \setminus A_{x,\epsilon}$, where $\epsilon>0$.
Additionally, from \eqref{eq:H&Rkernel} and \eqref{eq:sdmGaussian}, we can bound the probability to transition to $A_{x,\epsilon}^c$ as follows:
\begin{align}
 & \pi_{\HR}(x, A_{x,\epsilon}^c)
 \ =\ \int_{\mathbb{S}^{d-1}}\int_{\mathbb{R}} 1_{ A_{x,\epsilon}^c } \left(x + h v \right) \, \gamma^\C_{x,v}(dh) \, \sigma_{d-1}(dv)
 \ =\ \int_{\mathbb{S}^{d-1}}\int_{-\epsilon}^\epsilon \gamma^\C_{x,v}(dh) \, \sigma_{d-1}(dv)\nonumber\\
 &\qquad\leq\ \int_{\mathbb{S}^{d-1}}\int_{-\epsilon}^{\epsilon}\,dh \, |\C^{-1/2}v|\, \sigma_{d-1}(dv)
 \ \leq\ 2\epsilon\sup\nolimits_{|v|=1}|\C^{-1/2}v|
 \ \leq\ 2\epsilon M^{1/2}\;,  \label{eq:Axepsic2b}
\end{align}  
where $M^{1/2}$ is the largest eigenvalue of $\C^{-1/2}$.  Similarly, we can bound
\begin{align}
& \int_{A_{x,\epsilon}^c}\log|y-x|^{-1}\pi_{\HR}(x,dy)
\ =\ \int_{\mathbb{S}^{d-1}}\int_{-\epsilon}^\epsilon\log|h|^{-1} \, \gamma^\C_{x,v}(dh) \, \sigma_{d-1}(dv)\nonumber \\
&\qquad\leq\ \int_{\mathbb{S}^{d-1}}\int_{-\epsilon}^\epsilon\log|h|^{-1} \, dh \, |\C^{-1/2}v| \, \sigma_{d-1}(dv)
\ \leq\ 2\epsilon(1-\log\epsilon)M^{1/2}\;.
\label{eq:Axepsic4b}
\end{align}
The bounds \eqref{eq:Axepsic2b} and \eqref{eq:Axepsic4b} will be useful to estimate the remaining terms in the region $A_{x,\epsilon}^c$.

Specifically, we decompose the second term, \rn{2}, as follows:
\begin{align}
\rn{2}  \ &= \   \rn{2}_a + \rn{2}_b \quad \text{where} \label{eq:re2_split} \\
\rn{2}_a \ &= \ \frac12\int_{A_{x,\epsilon}} \left( \frac{(\tilde{x} \cdot \C^{-1} (y-\tilde{x}))^2}{\left|\C^{-1/2} (y-\tilde{x}) \right|^2} - \frac{(x \cdot \C^{-1} (y-x))^2}{\left|\C^{-1/2} (y-x) \right|^2} \right)  \pi_{\HR}(x,dy) \;, \nonumber \\    
\rn{2}_b \ &= \ \frac12\int_{A_{x,\epsilon}^c} \left( \frac{(\tilde{x} \cdot \C^{-1} (y-\tilde{x}))^2}{\left|\C^{-1/2} (y-\tilde{x}) \right|^2} - \frac{(x \cdot \C^{-1} (y-x))^2}{\left|\C^{-1/2} (y-x) \right|^2} \right)  \pi_{\HR}(x,dy) \;. \nonumber 
\end{align}
The term $\rn{2}_a$ involves a difference of two squares. The first factor in this difference can be bounded by using the Cauchy-Schwarz and triangle inequality: \begin{align}
\rn{2}_a \ &= \    \frac{1}{2} \int_{A_{x,\epsilon}} \left(\frac{\tilde{x} \cdot \C^{-1} (y-\tilde{x})}{ \left|\C^{-1/2} (y-\tilde{x}) \right|}  + \frac{x \cdot \C^{-1} (y-x)}{ \left|\C^{-1/2} (y-x) \right|}\right) \nonumber \\
& \qquad\qquad\qquad\qquad \times \left( \frac{\tilde{x} \cdot \C^{-1} (y-\tilde{x})}{ \left|\C^{-1/2} (y-\tilde{x}) \right|} -  \frac{x \cdot \C^{-1} (y-x)}{ \left|\C^{-1/2} (y-x) \right|} \right)   \pi_{\HR}(x,dy) \;, \nonumber \\ 
 \ &\le \ \frac{1}{2} \Bigr( 2|\C^{-1/2}x| + \bigr|\C^{-1/2} (x-\tilde{x}) \bigr| \Bigr) \nonumber \\ 
& \qquad\qquad\qquad\qquad \times  \int_{A_{x,\epsilon}} \left| \frac{\tilde{x} \cdot \C^{-1} (y-\tilde{x})}{ \left|\C^{-1/2} (y-\tilde{x}) \right|} -  \frac{x \cdot \C^{-1} (y-x)}{ \left|\C^{-1/2} (y-x) \right|} \right| \pi_{\HR}(x,dy) \;. \label{eq:re2a0}
\end{align}
The integrand in \eqref{eq:re2a0} can also be bounded by using the Cauchy-Schwarz and triangle inequalities
\begin{align}
& \left| \frac{\tilde{x} \cdot \C^{-1} (y-\tilde{x})}{ \left|\C^{-1/2} (y-\tilde{x}) \right|} -  \frac{x \cdot \C^{-1} (y-x)}{ \left|\C^{-1/2} (y-x) \right|} \right|
\ \leq\ \bigr|\C^{-1/2} (x-\tilde{x}) \bigr| +  \left| \frac{x \cdot \C^{-1} (y-\tilde{x})}{ \left|\C^{-1/2} (y-\tilde{x}) \right|} -  \frac{x \cdot \C^{-1} (y-x)}{ \left|\C^{-1/2} (y-x) \right|} \right|  \nonumber \\
&\qquad \leq\ \bigr|\C^{-1/2} (x-\tilde{x}) \bigr| + |\C^{-1/2} x| \left| \frac{\C^{-1/2} (y-\tilde{x})}{ \left|\C^{-1/2} (y-\tilde{x}) \right|} -  \frac{\C^{-1/2} (y-\tilde{x})}{ \left|\C^{-1/2} (y-x) \right|} + \frac{\C^{-1/2} (x-\tilde{x})}{ \left|\C^{-1/2} (y-x) \right|} \right|  \nonumber \\
&\qquad \leq\ \bigr|\C^{-1/2} (x-\tilde{x}) \bigr| + |\C^{-1/2} x| \left( \left| \frac{\C^{-1/2} (y-\tilde{x})}{ \left|\C^{-1/2} (y-\tilde{x}) \right|} -  \frac{\C^{-1/2} (y-\tilde{x})}{ \left|\C^{-1/2} (y-x) \right|} \right| + \left| \frac{\C^{-1/2} (x-\tilde{x})}{ \left|\C^{-1/2} (y-x) \right|} \right| \right) \nonumber \\
&\qquad =\ \bigr|\C^{-1/2} (x-\tilde{x}) \bigr| + \frac{|\C^{-1/2} x|}{\left|\C^{-1/2} (y-x) \right|} \left( \Bigr| \bigr|\C^{-1/2} (y-x) \bigr|  -  \bigr|\C^{-1/2} (y-\tilde{x}) \bigr| \Bigr| + \bigr| \C^{-1/2} (x-\tilde{x}) \bigr| \right) \nonumber \\
&\qquad  \leq\ \Bigr(1 + \frac{2}{\epsilon m^{1/2}} |\C^{-1/2} x|\Bigr) \bigr|\C^{-1/2} (x-\tilde{x}) \bigr| \label{eq:re2aintegrand}
\end{align}
where in the last step we used that $|\C^{-1/2} (y-x)| \geq \epsilon m^{1/2}$ in $A_{x,\epsilon}$.
Inserting the bound \eqref{eq:re2aintegrand} back into \eqref{eq:re2a0} gives
\begin{align}
\rn{2}_a \ &\leq\ \frac{1}{2} \Bigr( 2| \C^{-1/2} x| + \bigr|\C^{-1/2} (x-\tilde{x}) \bigr| \Bigr)    \Bigr(1 + \frac{2}{\epsilon m^{1/2}} | \C^{-1/2} x | \Bigr) \bigr|\C^{-1/2} (x-\tilde{x}) \bigr| \nonumber \\
\ &\le \  \Bigr( | \C^{-1/2} x | + \frac{2}{\epsilon m^{1/2}} | \C^{-1/2} x |^2 \Bigr) \bigr|\C^{-1/2} (x-\tilde{x}) \bigr| + \frac{1}{2} \Bigr(1 + \frac{2}{\epsilon m^{1/2}} | \C^{-1/2} x | \Bigr)  \bigr|\C^{-1/2} (x-\tilde{x}) \bigr|^2 \;. \label{eq:re2a}
\end{align}

On the other hand, by again using the Cauchy-Schwarz, triangle inequalities and \eqref{eq:Axepsic2b}
\begin{align}
\rn{2}_b \ &\leq \ \frac12\int_{A_{x,\epsilon}^c} \left(  \left|\frac{\tilde{x} \cdot \C^{-1} (y-\tilde{x}))}{\left|\C^{-1/2} (y-\tilde{x}) \right|}\right|^2 + \left|\frac{x \cdot \C^{-1} (y-x))}{\left|\C^{-1/2} (y-\tilde{x}) \right|}\right|^2 \right)  \pi_{\HR}(x,dy) \nonumber\\
&\leq \ \frac{1}{2}\bigr(|\C^{-1/2} \tilde{x}|^2+|\C^{-1/2}x|^2\bigr) \, \pi_{\HR}(x,A_{x,\epsilon}^c)\ \leq\ 2\epsilon M^{1/2}\bigr|\C^{-1/2}(x-\tilde x)\bigr|^2+3\epsilon M^{1/2}|\C^{-1/2}x|^2\;.
\label{eq:re2b}
\end{align}
Inserting \eqref{eq:re2a} and \eqref{eq:re2b} into \eqref{eq:re2_split} yields \begin{equation} \label{eq:re2}
\begin{aligned}
& \rn{2} \ \leq \ \Bigr( | \C^{-1/2} x | + \frac{2}{\epsilon m^{1/2}} | \C^{-1/2} x |^2 \Bigr) \bigr|\C^{-1/2} (x-\tilde{x}) \bigr| \\
& \qquad  + \Bigr(\frac{1}{2}  + 2 \epsilon M^{1/2}  + \frac{1}{\epsilon m^{1/2}} | \C^{-1/2} x | \Bigr)  \bigr|\C^{-1/2} (x-\tilde{x}) \bigr|^2 +  3 \epsilon M^{1/2}  | \C^{-1/2} x |^2 \;.
\end{aligned}
\end{equation}

Likewise, we can decompose the third term, \rn{3}, as: \begin{align}
 \rn{3}  \ &= \   \rn{3}_a + \rn{3}_b \quad \text{where} \label{eq:re3_split} \\
 \rn{3}_a \ &= \ (d - 1) \int_{A_{x,\epsilon}} \log\frac{\left|y-\tilde{x} \right|}{\left|y-x \right|}  \pi_{\HR}(x,dy) \;, \nonumber \\    
 \rn{3}_b \ &= \ (d-1) \int_{A_{x,\epsilon}^c} \log\frac{\left|y-\tilde{x} \right|}{\left|y-x \right|}  \pi_{\HR}(x,dy) \;. \nonumber 
\end{align}
Applying the elementary inequality \begin{equation}
\label{ieq:log}
\log(\sf{x})\ \leq\ \sf{x} - 1  \quad \text{for $\sf{x} > 0$} 
\end{equation}
in $\rn{3}_a$ gives
\begin{align}
\rn{3}_a \ &\leq \ (d-1) \int_{A_{x,\epsilon}} \left( \frac{\left|y-\tilde{x} \right|}{\left|y-x \right|} - 1 \right)  \pi_{\HR}(x,dy)
\ \leq  \   (d -1)  \int_{A_{x,\epsilon}} \frac{\bigr||y-\tilde{x}|-|y-x|\bigr|}{|y-x|}   \pi_{\HR}(x,dy) \nonumber \\ 
\ &\le \  (d-1) \frac{|x-\tilde{x}|}{\epsilon}
\ \leq\ \frac{(d-1)m^{-1/2}}{\epsilon} \bigr|\C^{-1/2} (x-\tilde{x}) \bigr|  \;, \label{eq:re3a}
\end{align}
where we used the triangle inequality, $|y-x|\geq\epsilon$ for $y \in A_{x,\epsilon}$, and $|x-\tilde{x}| \leq m^{-1/2} |\C^{-1/2} (x-\tilde{x})|$.

On the other hand, for $y \in A_{x,\epsilon}^c$, we have $|y-x|\leq\epsilon$.  In this case, we further decompose $\rn{3}_b=\rn{3}_{b,1}+\rn{3}_{b,2}$ where
\begin{align*}
\rn{3}_{b,1}\ &=\ (d-1) \int_{A_{x,\epsilon}^c} \log|y-\tilde{x}|\,\pi_{\HR}(x,dy) \;, \\ 
\rn{3}_{b,2}\ &=\ (d-1) \int_{A_{x,\epsilon}^c} \log|y-x|^{-1}\pi_{\HR}(x,dy) \;. 
\end{align*} 
For the first term, we use $|y-x| \leq \epsilon \leq 1$, as follows:
\begin{align}
\rn{3}_{b,1} \ &\leq\ (d-1)\int_{A_{x,\epsilon}^c}\log|y-\tilde{x}|\,\pi_{\HR}(x,dy)
\ \leq\ (d-1)\int_{A_{x,\epsilon}^c}\log\bigr(|y-x|+|x-\tilde{x}|\bigr)\pi_{\HR}(x,dy) \nonumber \\
&\leq\  (d-1)\log\bigr(1 + |x-\tilde{x}| \bigr)\pi_{\HR}(x,A_{x,\epsilon}^c)
\ \leq\ 2\epsilon M^{1/2}(d-1)|x-\tilde{x}| \nonumber\\
&\leq\ 2\epsilon\kappa^{1/2}(d-1)\bigr|\C^{-1/2} (x-\tilde{x})\bigr|\;, \label{eq:re3b1}
\end{align}
where we used \eqref{ieq:log}, \eqref{eq:Axepsic2b}, and $|x-\tilde{x}| \leq m^{-1/2} |\C^{-1/2} (x-\tilde{x})|$.
For the second term, by \eqref{eq:Axepsic4b}
\begin{align}
\rn{3}_{b,2}\ \leq\ (d-1)\int_{A_{x,\epsilon}^c}\log|y-x|^{-1}\pi_{\HR}(x,dy)
\ \leq \ 2\epsilon(1-\log\epsilon)M^{1/2}(d-1)\;. \label{eq:re3b2}
\end{align}
Inserting \eqref{eq:re3a}, \eqref{eq:re3b1}, and \eqref{eq:re3b2} into \eqref{eq:re3_split} yields
\begin{equation} \label{eq:re3}
\rn{3} \ \leq \ (d-1)\Bigr(\bigr(\epsilon^{-1}m^{-1/2} + 2\epsilon\kappa^{1/2}\bigr)\bigr|\C^{-1/2} (x-\tilde{x})\bigr| + 2\epsilon(1-\log\epsilon)M^{1/2}\Bigr)\;.
\end{equation}

Similarly, we can decompose the fourth term, \rn{4}, as
\begin{align}
 \rn{4}  \ &= \   \rn{4}_a + \rn{4}_b \quad \text{where} \label{eq:re4_split} \\
 \rn{4}_a \ &= \ \int_{A_{x,\epsilon}}  \left( \log\frac{|\C^{-1/2}(y-x)|}{|y-x|} + \log\frac{\left|y-\tilde{x} \right|}{\left|\C^{-1/2} (y-\tilde{x}) \right|}   \right) \,  \pi_{\HR}(x,dy) \;, \nonumber \\    
 \rn{4}_b \ &= \ \int_{A_{x,\epsilon}^c}  \left( \log\frac{|\C^{-1/2}(y-x)|}{|y-x|} + \log\frac{\left|y-\tilde{x} \right|}{\left|\C^{-1/2} (y-\tilde{x}) \right|}   \right) \,  \pi_{\HR}(x,dy) \;. \nonumber 
\end{align}
For $y \in A_{x,\epsilon}$, we have $|y-x| \geq \epsilon$.
In this case, we further decompose $\rn{4}_a=\rn{4}_{a,1}+\rn{4}_{a,2}$ as follows:
\begin{align*}
\rn{4}_{a,1} \ &= \   \int_{A_{x,\epsilon}\cap A_{\tilde{x}, \epsilon}}  \left(\log\frac{|\C^{-1/2}(y-x)|}{|\C^{-1/2}(y-\tilde{x})|} + \log \frac{|y-\tilde{x}|}{|y-x|}\right)\pi_{\HR}(x,dy) \;,    \\
\rn{4}_{a,2} \ &= \  \int_{A_{x,\epsilon}\cap A_{\tilde{x}, \epsilon}^c} \left(\log\frac{|\C^{-1/2}(y-x)|}{|y-x|} + \log\frac{|y-\tilde{x}|}{|\C^{-1/2}(y-\tilde{x})|}\right)\pi_{\HR}(x,dy) \;. 
\end{align*}
By applying \eqref{ieq:log} and the triangle inequality, the first term can be bounded by
\begin{align}
\rn{4}_{a,1} \ &\leq \ \int_{A_{x,\epsilon}\cap A_{\tilde{x}, \epsilon}}\left(\frac{\left|\C^{-1/2} (y-x) \right| - \left|\C^{-1/2} (y-\tilde{x}) \right|}{\left|\C^{-1/2} (y-\tilde{x}) \right|} + \frac{\left|y-\tilde{x} \right| - \left|y-x \right|}{\left|y-x \right|}\right) \pi_{\HR}(x,dy)  \nonumber \\
&\leq \ \int_{A_{x,\epsilon}\cap A_{\tilde{x}, \epsilon}}\left(\frac{\left|\C^{-1/2} (x-\tilde{x}) \right|}{\left|\C^{-1/2} (y-\tilde{x}) \right|} + \frac{\left|x-\tilde{x} \right|}{\left|y-x \right|} \right) \pi_{\HR}(x,dy) \nonumber \\
&\leq \ \left(\frac{|\C^{-1/2} (x-\tilde{x})|}{\epsilon m^{1/2}} + \frac{|x-\tilde{x}|}{\epsilon}\right)\pi_{\HR}(x,A_{x,\epsilon}\cap A_{\tilde{x}, \epsilon})
\ \leq \  \frac{2}{\epsilon m^{1/2}}\bigr|\C^{-1/2}(x-\tilde{x})\bigr| \;.
\label{eq:re4a1}
\end{align}
For the second term, by again applying \eqref{ieq:log}, and subsequently using that $\epsilon<|y-x|\leq\epsilon+|x-\tilde x|$ for $y\in A_{x,\epsilon}\cap A_{\tilde{x}, \epsilon}^c$ in an estimate resembling \eqref{eq:Axepsic2b},
\begin{align}
\rn{4}_{a,2} \ &\leq \ \int_{A_{x,\epsilon}\cap A_{\tilde{x}, \epsilon}^c}\left(\frac{|\C^{-1/2}(y-x)|}{|y-x|}+\frac{|y-\tilde{x}|}{|\C^{-1/2} (y-\tilde{x})|}-2\right) \pi_{\HR}(x,dy)  \nonumber \\  
&\leq \  \bigr| M^{1/2} + m^{-1/2} -2 \bigr| \, \pi_{\HR}(x,A_{x,\epsilon}\cap A_{\tilde{x}, \epsilon}^c) \nonumber \\
&\leq \  \bigr( M^{1/2} + m^{-1/2} +2 \bigr) \int_{\mathbb{S}^{d-1}} \int_{\mathbb{R}}  1_{A_{x,\epsilon}\cap A_{\tilde{x}, \epsilon}^c}(x+hv) \, \gamma^\C_{x,v}(dh) \, \sigma_{d-1}(dv) \nonumber \\
&\leq \  \bigr( M^{1/2} + m^{-1/2} +2 \bigr)  \int_{\mathbb{S}^{d-1}} \int_\epsilon^{\epsilon+|x-\tilde x|}dh\, |C^{-1/2}v| \, \sigma_{d-1}(dv) \nonumber \\
\ &\leq \  \bigr( M^{1/2} + m^{-1/2} +2 \bigr) \kappa^{1/2}\bigr|\C^{-1/2}(x-\tilde x)\bigr| \;.
\label{eq:re4a2}
\end{align}
The last two displays together give
\begin{align}
\rn{4}_a \ &\leq \ \Bigr(\frac{2}{\epsilon m^{1/2}}+\bigr( M^{1/2} + m^{-1/2} +2 \bigr) \kappa^{1/2}\Bigr)\bigr|\C^{-1/2}(x-\tilde{x})\bigr| \;. \label{eq:re4a}
\end{align}
On the other hand,  by applying \eqref{ieq:log} and \eqref{eq:Axepsic2b},
\begin{align}
\rn{4}_b \ &\leq \ \int_{A_{x,\epsilon}^c}  \left(\frac{|\C^{-1/2}(y-x)|}{|y-x|} + \frac{\left|y-\tilde{x} \right|}{\left|\C^{-1/2} (y-\tilde{x}) \right|} - 2\right) \,  \pi_{\HR}(x,dy) \nonumber\\
&\leq \ \bigr| M^{1/2} + m^{-1/2} -2 \bigr| \, \pi_{\HR}(x,A_{x,\epsilon}^c) \ \leq \  2\epsilon M^{1/2}\bigr( M^{1/2} + m^{-1/2} +2 \bigr)\;. \label{eq:re4b}
\end{align}
Inserting \eqref{eq:re4a} and \eqref{eq:re4b} into \eqref{eq:re4_split} yields \begin{equation} \label{eq:re4}
\rn{4} \ \leq \ \Bigr(\frac{2}{\epsilon m^{1/2}}+\bigr( M^{1/2} + m^{-1/2} +2 \bigr) \kappa^{1/2}\Bigr)\bigr|\C^{-1/2}(x-\tilde{x})\bigr| + 2\epsilon M^{1/2}\bigr( M^{1/2} + m^{-1/2} +2 \bigr)\;.
\end{equation}

Inserting \eqref{eq:re1}, \eqref{eq:re2}, \eqref{eq:re3} and \eqref{eq:re4} into \eqref{eq:re} yields
\begin{align*}
&\mathcal H \bigr( \pi_{\HR}(x,\cdot) \bigr|\pi_{\HR}(\tilde{x},\cdot)\bigr)
\ \leq\ \Bigr[\epsilon^{-1}m^{-1/2}|\C^{-1/2} x| + 2\epsilon M^{1/2} + 1\Bigr]\bigr|\C^{-1/2} (x-\tilde{x}) \bigr|^2\\
&\quad+\Bigr[2\epsilon^{-1}m^{-1/2}|\C^{-1/2}x|^2 + 2|\C^{-1/2}x| + \bigr(\epsilon^{-1}m^{-1/2} + 2\epsilon\kappa^{1/2}\bigr)(d-1) \\
&\qquad+ 2\epsilon^{-1}m^{-1/2}+\bigr( M^{1/2} + m^{-1/2} +2 \bigr) \kappa^{1/2}\Bigr]\bigr|\C^{-1/2} (x-\tilde{x})\bigr| \\
&\quad+ \Bigr[3M^{1/2}|\C^{-1/2} x|^2 + 2(1+\log\epsilon^{-1})M^{1/2}(d-1) + 2M^{1/2}\bigr( M^{1/2} + m^{-1/2} +2 \bigr)\Bigr]\epsilon\;.
\end{align*}

Finally, inserting the last display into Pinsker’s inequality \eqref{eq:Pinsker} and using subadditivity of the square root, we obtain 
\begin{align*}
&\TV\bigr(\pi_{\HR}(x,\cdot),\,\pi_{\HR}(\tilde{x},\cdot)\bigr)
\ \leq\ \sqrt 2\,\mathcal H \bigr( \pi_{\HR}(x,\cdot) \bigr|\pi_{\HR}(\tilde{x},\cdot)\bigr)^{1/2} \\
&\qquad \leq\ \sqrt 2\Bigr(C_1(x)^{1/2}\bigr|\C^{-1/2}(x-\tilde{x})\bigr| + C_2(x)^{1/2}\bigr|\C^{-1/2}(x-\tilde{x})\bigr|^{1/2} + C_3(x)\epsilon^{1/2}\Bigr)
\end{align*}
finishing the proof.
\end{proof}

We now extend the overlap bound of Lemma \ref{lem:overlap1} to arbitrary initial distributions.

\medskip

\begin{lemma} \label{lem:regularization}
For any $\epsilon \in (0,1)$ and $\eta, \nu \in \mathcal{P}(\mathbb{R}^d)$,
\begin{equation*}
\TV\bigr(\eta \pi_{\HR} ,\, \nu \pi_{\HR} \bigr) \ \leq \ \sqrt{2} \left(  \bigr(\eta(C_1)\bigr)^{1/2} \, \W^2_{\C^{-1/2}}(\eta, \nu) + \bigr(\eta(C_2)\bigr)^{1/2} \, \W^2_{\C^{-1/2}}(\eta, \nu)^{1/2} + \eta(C_3) \, \epsilon^{1/2}  \right)  \;.
\end{equation*}
\end{lemma}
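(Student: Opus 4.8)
The plan is to deduce the statement from the pointwise overlap bound of Lemma~\ref{lem:overlap1} by integrating it against an optimal transport coupling. We may assume the right-hand side is finite, in particular $\W^2_{\C^{-1/2}}(\eta,\nu)<\infty$ and $\eta(C_1),\eta(C_2),\eta(C_3)<\infty$, since otherwise there is nothing to prove. Let $\omega$ be a coupling of $\eta$ and $\nu$ that is optimal for $\W^2_{\C^{-1/2}}$, so that its marginals are $\eta$ and $\nu$ and $\int|x-\tilde x|_{\C^{-1/2}}^2\,\omega(dx\,d\tilde x)=\W^2_{\C^{-1/2}}(\eta,\nu)^2$; such an $\omega$ exists because $|\cdot|_{\C^{-1/2}}$ is a fixed multiple of the Euclidean norm, so the transport cost is lower semicontinuous and the set of couplings with the given marginals is weakly compact, and if one prefers one may instead work with a minimizing sequence of couplings and pass to the limit at the end.

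Since $\eta\pi_{\HR}=\int\pi_{\HR}(x,\cdot)\,\omega(dx\,d\tilde x)$ and $\nu\pi_{\HR}=\int\pi_{\HR}(\tilde x,\cdot)\,\omega(dx\,d\tilde x)$, the joint convexity of the total variation distance (it does not increase when both of its arguments are mixed against the same kernel $\omega$) gives
\[
\TV\bigl(\eta\pi_{\HR},\,\nu\pi_{\HR}\bigr)\ \leq\ \int\TV\bigl(\pi_{\HR}(x,\cdot),\,\pi_{\HR}(\tilde x,\cdot)\bigr)\,\omega(dx\,d\tilde x)\;.
\]
Inserting the bound of Lemma~\ref{lem:overlap1} under the integral — legitimate since the $C_i$ are explicit continuous functions of $|x|_{\C^{-1/2}}$ and the integrand is measurable — bounds the right-hand side by $\sqrt2$ times
\[
\int\bigl(C_1(x)^{1/2}\,|x-\tilde x|_{\C^{-1/2}}+C_2(x)^{1/2}\,|x-\tilde x|_{\C^{-1/2}}^{1/2}+C_3(x)\,\epsilon^{1/2}\bigr)\,\omega(dx\,d\tilde x)\;.
\]

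It then remains to split this into three integrals and estimate each using that the first marginal of $\omega$ is $\eta$. For the first term, Cauchy--Schwarz in $L^2(\omega)$ gives $\int C_1(x)^{1/2}|x-\tilde x|_{\C^{-1/2}}\,\omega\le\eta(C_1)^{1/2}\,\W^2_{\C^{-1/2}}(\eta,\nu)$. For the second term, Cauchy--Schwarz followed by Jensen's inequality, namely $\int|x-\tilde x|_{\C^{-1/2}}\,\omega\le\bigl(\int|x-\tilde x|_{\C^{-1/2}}^2\,\omega\bigr)^{1/2}=\W^2_{\C^{-1/2}}(\eta,\nu)$, yields $\int C_2(x)^{1/2}|x-\tilde x|_{\C^{-1/2}}^{1/2}\,\omega\le\eta(C_2)^{1/2}\,\W^2_{\C^{-1/2}}(\eta,\nu)^{1/2}$. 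The third term equals $\int C_3(x)\,\epsilon^{1/2}\,\omega=\eta(C_3)\,\epsilon^{1/2}$. Adding the three estimates and restoring the prefactor $\sqrt2$ gives exactly the claimed inequality. I expect no genuine analytic obstacle; the only points warranting a sentence of justification are the existence (or approximation) of the optimal $\W^2_{\C^{-1/2}}$-coupling and the joint convexity of $\TV$ together with the measurability of $(x,\tilde x)\mapsto\TV(\pi_{\HR}(x,\cdot),\pi_{\HR}(\tilde x,\cdot))$, both standard.
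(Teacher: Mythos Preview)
Your proposal is correct and follows essentially the same route as the paper: both start from the coupling inequality $\TV(\eta\pi_{\HR},\nu\pi_{\HR})\le\int\TV(\pi_{\HR}(x,\cdot),\pi_{\HR}(\tilde x,\cdot))\,\omega(dx\,d\tilde x)$, insert Lemma~\ref{lem:overlap1}, and then apply Cauchy--Schwarz and Jensen to each of the three terms. The only cosmetic difference is that the paper works with an arbitrary coupling and takes the infimum at the end, whereas you select an optimal $\W^2_{\C^{-1/2}}$-coupling upfront; the resulting bounds coincide.
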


\begin{proof}[Proof of Lemma \ref{lem:regularization}]
Let $(X,\widetilde{X})$ be an arbitrary coupling of $\eta$ and $\nu$. By the coupling characterization of the TV distance and Lemma~\ref{lem:overlap1},
\begin{align*}
&\TV\bigr( \eta \pi_{\HR}, \, \nu \pi_{\HR} \bigr)
\ \leq \ \mathbb E\TV\bigr(\pi_{\HR}(X,\cdot),\,\pi_{\HR}(\widetilde{X},\cdot)\bigr)  \\
& \qquad \leq \  \sqrt{2} \, \mathbb E\Bigr( C_1(X)^{1/2} |X - \widetilde{X}|_{\C^{-1/2}}  +  C_2(X)^{1/2} |X - \widetilde{X} |_{\C^{-1/2}}^{1/2}  + C_3(X) \epsilon^{1/2} \Bigr)\;.
\end{align*}
By Cauchy-Schwarz and Jensen's inequality,
\begin{align*}
\TV\bigr( \eta \pi_{\HR}, \, \nu \pi_{\HR} \bigr)  \ &\leq \  \sqrt{2} \Bigr(\bigr(\eta(C_1)\bigr)^{1/2}\bigr(\mathbb E |X - \widetilde{X}|_{\C^{-1/2}}^2 \bigr)^{1/2} \\
& \qquad \qquad + \bigr(\eta(C_2)\bigr)^{1/2}\bigr(\mathbb E |X - \widetilde{X}|_{\C^{-1/2}}^2 \bigr)^{1/4}  + \eta(C_3)\epsilon^{1/2} \Bigr)\;.
\end{align*}
Take the infimum over all couplings of $\eta$ and $\nu$ to finish the proof.  
\end{proof}

After proving the required overlap bounds, we are positioned to prove the mixing time upper bound for Hit-and-Run.

\begin{proof}[Proof of Theorem \ref{thm:HRmixing}]
Let $C,C'>0$ be an absolute constants that may change from line to line.

Inserting $\gamma^\C(|x|_{\C^{-1/2}}^2)=d$ and $\gamma^\C(|x|_{\C^{-1/2}})\leq d^{1/2}$ into the constants of Lemma~\ref{lem:overlap1} yields
\begin{align}
    \gamma^\C(C_1) \ &\leq \ C\max(\kappa,M,m^{-1})^{1/2}d^{1/2}\epsilon^{-1} \;, \label{eq:gammaC1}\\ 
    \gamma^\C(C_2) \ &\leq \ C\max(\kappa,M,m^{-1})d\epsilon^{-1} \;, \label{eq:gammaC2}\\
    \gamma^\C(C_3)\epsilon^{1/2} \ &\leq \ C\max(\kappa,M,m^{-1})^{1/2}d^{1/2}(1+\log\epsilon^{-1})^{1/2}\epsilon^{1/2} \;. \label{eq:gammaC3}
\end{align}

Combining Lemma~\ref{lem:HRcontr} and Lemma~\ref{lem:regularization}, we see for any  $\nu \in \mathcal{P}(\mathbb{R}^d)$ and any $n \in \mathbb{N}$
\begin{equation*}
\begin{aligned}
& \TV\bigr(\gamma^{\C}  ,\, \nu \pi_{\HR}^{n+1} \bigr) \ = \ \TV\bigr(\gamma^{\C} \pi_{\HR}^{n+1} ,\, \nu \pi_{\HR}^{n+1} \bigr)  \\
& ~~  \leq \sqrt{2} \left(  \bigr(\gamma^{\C}(C_1)\bigr)^{1/2} \,  \W^2_{\C^{-1/2}}(\gamma^{\C}, \nu) e^{-\rho n} + \bigr(\gamma^{\C}(C_2)\bigr)^{1/2}  \W^2_{\C^{-1/2}}(\gamma^{\C}, \nu)^{1/2} e^{-\rho n/2} + \gamma^{\C}(C_3) \epsilon^{1/2}  \right)\;.
\end{aligned}
\end{equation*}
The right hand side is bounded above by $\eps>0$ if the following three bounds hold:
\begin{align}
    \bigr(\gamma^{\C}(C_1)\bigr)^{1/2} \,  \W^2_{\C^{-1/2}}(\gamma^{\C}, \nu) e^{-\rho n} \ &\leq\ \frac\eps{3\sqrt 2}\;, \label{eq:ncond1}\\
    \bigr(\gamma^{\C}(C_2)\bigr)^{1/2}  \W^2_{\C^{-1/2}}(\gamma^{\C}, \nu)^{1/2} e^{-\rho n/2} \ &\leq \ \frac\eps{3\sqrt 2}\;, \label{eq:ncond2}\\ 
    \gamma^{\C}(C_3) \epsilon^{1/2} \ &\leq \ \frac\eps{3\sqrt 2}\;. \label{eq:epscond}
\end{align}
Using $(1+\log\epsilon^{-1})\epsilon\leq2\epsilon^{1/2}$ in \eqref{eq:gammaC3}, we see that \eqref{eq:epscond} holds for
\[ \epsilon\ =\ C\max(\kappa,M,m^{-1})^{-2}d^{-2}\eps^4\quad\text{with a suitable absolute constant $C$.} \]
Inserting this choice into \eqref{eq:gammaC1} and \eqref{eq:gammaC2} yields
\[ \max\bigr(\gamma^\C(C_1),\,\gamma^\C(C_2)\bigr)\ \leq\ C\max(\kappa,M,m^{-1})^3d^3\eps^{-4}\;. \]
As conditions \eqref{eq:ncond1} and \eqref{eq:ncond2} are guaranteed for
\[ n\ \geq\ C\rho^{-1}\log\Bigr(C'\max\bigr(\gamma^\C(C_1),\,\gamma^\C(C_2)\bigr)\,\eps^{-1}\,\W^2_{\C^{-1/2}}(\gamma^{\C}, \nu)\Bigr) \]
with suitable absolute constants $C,C'$, the mixing time satisfies
\[ \tmix(\eps,\nu)\ \leq\ C\rho^{-1}\log\Bigr(C'\max(\kappa,M,m^{-1})\,d\,\eps^{-1}\W^2_{\C^{-1/2}}(\gamma^{\C}, \nu)\Bigr)\;. \]
\end{proof}

\section{Coordinate-free randomized Kaczmarz algorithm}\label{sec:kaczmarz}

The Kaczmarz algorithm \cite{kaczmarz,gower15,gower15b} is an iterative method for approximately solving overdetermined linear systems of the form
\begin{equation}\label{eq:linsystem}
    Ax\ =\ b
\end{equation}
where $A\in\mathbb R^{d\times m}$ is a full rank matrix with $d\geq m$, and $b\in\mathbb R^d$.  The algorithm leverages the geometric interpretation of the solution to \eqref{eq:linsystem} as the intersection of the hyperplanes
\begin{equation}\label{eq:Hi}
    H_i\ =\ \bigr\{x\in\mathbb R^m:e_i\cdot(Ax-b)=0\bigr\}\quad\text{for $1\leq i\leq d$}
\end{equation}
where $e_i$ is the $i$-th canonical basis vector of $\mathbb R^d$.  

Starting from an initial state $x_0\in\mathbb R^m$, the Kaczmarz method iteratively updates the solution. At each iteration, a basis vector $e_i$ is selected, and the next state $x_{k+1}$ is computed as the orthogonal projection of the current state $x_k$ onto the corresponding hyperplane $H_i$.  The classical Kaczmarz method selects the basis vectors deterministically in a cyclic order \cite{kaczmarz}.  

Strohmer and Vershynin \cite{vershynin09} later introduced and analyzed a randomized variant of the Kaczmarz method, where the basis vectors are selected randomly, with probabilities proportional to $|A^Te_i|^2$.  They provided a sharp bound on the mean-squared convergence  to the solution of \eqref{eq:linsystem}. 

In each iteration, the direction of update in the randomized Kaczmarz method is restricted to $\operatorname{span}(A^Te_i)$, i.e., normal to the selected hyperplane $H_i$.  Analogously, the random-scan Gibbs sampler is restricted to move in $\operatorname{span}(e_i)$, corresponding to a randomly selected basis vector.  The possible improvement from diffusive to ballistic mixing in Hit-and-Run --- achieved by allowing random-scan Gibbs to move in any coordinate-free, randomly sampled direction $v\in\mathbb S^{d-1}$ --- motivates us to consider a similar coordinate-free extension for the randomized Kaczmarz algorithm.

We begin by generalizing \eqref{eq:Hi} from coordinate-bound unit vectors to coordinate-free unit vectors.  Specifically, the solution to \eqref{eq:linsystem} can be expressed as the intersection of the hyperplanes 
\[ H(v)\ =\ \bigr\{x\in\mathbb R^m:v\cdot(Ax-b)=0\bigr\}\quad\text{for $v\in\mathbb S^{d-1}$.} \]
Let $\tau$ be a probability measure on $\mathbb S^{d-1}$.   The iterations of the \emph{generalized randomized Kaczmarz algorithm} are defined as follows: select $v\sim\tau$, then compute the updated state $x_{k+1}$ by projecting the current state $x_k$ orthogonally onto $H(v)$.  Since
\[ H(v)\ =\ \operatorname{span}\Bigr(A^Tv\Bigr)^\perp+\frac{v\cdot b}{|A^Tv|}\frac{A^Tv}{|A^Tv|}\;, \]
the update step can be written explicitly as
\[ x_{k+1}\ =\ \Pi_{A^Tv}\,x_k+\frac{v\cdot b}{|A^Tv|}\frac{A^Tv}{|A^Tv|}\quad\text{for $v\sim\tau$.} \]
We again have a special interest in the case $\tau=\Unif(\mathbb S^{d-1})$, which corresponds to the \emph{coordinate-free randomized Kaczmarz algorithm}.

\begin{figure}[t]
\centering
\noindent\begin{minipage}[b]{.3\textwidth}
\centering
\begin{tikzpicture}[scale=1.6]
\draw (-1.5,0) -- (1.5,0);
\draw (0,-1.5) -- (0,1.5);
\draw[line width=1pt] (0,0) circle (1);
\foreach \k in {0,...,23} {\filldraw[gray] ({cos((\k/24)*2*pi r)},{sin((\k/24)*2*pi r)}) circle (2pt);}
\foreach \k in {0,6,12,18} {\filldraw ({cos((\k/24)*2*pi r)},{sin((\k/24)*2*pi r)}) circle (2pt);}
\end{tikzpicture}
\caption*{$\law(v)=\Unif(\mathbb S^1)$}
\end{minipage}%
\noindent\begin{minipage}[b]{.3\textwidth}
\centering
\begin{tikzpicture}[scale=1.6]
\draw (-1.5,0) -- (1.5,0);
\draw (0,-1.5) -- (0,1.5);
\draw [variable=\t,line width=1pt,smooth,samples=100,domain=0:2*pi] plot({0.2*sin(\t r)},{cos(\t r)+sin(\t r)});
\foreach \k in {0,...,23} {\filldraw[gray] ({0.2*sin((\k/24)*2*pi r)},{cos((\k/24)*2*pi r)+sin((\k/24)*2*pi r)}) circle (2pt);}
\foreach \k in {0,6,12,18} {\filldraw ({0.2*sin((\k/24)*2*pi r)},{cos((\k/24)*2*pi r)+sin((\k/24)*2*pi r)}) circle (2pt);}
\end{tikzpicture}
\caption*{$\law(A^Tv)$}
\end{minipage}%
\noindent\begin{minipage}[b]{.3\textwidth}
\centering
\begin{tikzpicture}[scale=1.6]
\draw (-1.5,0) -- (1.5,0);
\draw (0,-1.5) -- (0,1.5);
\draw[line width=1pt] (0,0) circle (1);
\foreach \k in {0,...,23} {\filldraw[gray] ({0.2*sin((\k/24)*2*pi r)/sqrt(0.2^2*sin((\k/24)*2*pi r)^2+(cos((\k/24)*2*pi r)+sin((\k/24)*2*pi r))^2)},{(cos((\k/24)*2*pi r)+sin((\k/24)*2*pi r))/sqrt(0.2^2*sin((\k/24)*2*pi r)^2+(cos((\k/24)*2*pi r)+sin((\k/24)*2*pi r))^2)}) circle (2pt);}
\foreach \k in {0,6,12,18} {\filldraw ({0.2*sin((\k/24)*2*pi r)/sqrt(0.2^2*sin((\k/24)*2*pi r)^2+(cos((\k/24)*2*pi r)+sin((\k/24)*2*pi r))^2)},{(cos((\k/24)*2*pi r)+sin((\k/24)*2*pi r))/sqrt(0.2^2*sin((\k/24)*2*pi r)^2+(cos((\k/24)*2*pi r)+sin((\k/24)*2*pi r))^2)}) circle (2pt);}
\end{tikzpicture}
\caption*{$\law(A^Tv/|A^Tv|)$}
\end{minipage}%
\caption{\textit{Distribution of the directions $A^Tv/|A^Tv|$, $v\sim\Unif(\mathbb S^1)$, along which coordinate-free randomized Kaczmarz moves, for $A$ as in \eqref{eq:A}.  The gray beads illustrate the redistribution of probability mass, favoring directions nearly orthogonal to $e_1$ (resulting in local moves along $e_1$) while retaining enough mass in directions allowing global moves along $e_1$.  This enables the coordinate-free method to converge at the ballistic rate in \eqref{eq:kaczmarz_ballistic}.  In contrast, the black beads represent the fixed directions to which randomized Kaczmarz is restricted, explaining its slower, diffusive rate in \eqref{eq:kaczmarz_diffusive}.}}
\label{fig:kaczmarzlaw}
\end{figure}

\medskip

\begin{lemma}\label{lem:kaczmarz}
Let $x^\ast\in\mathbb R^m$ be the solution to \eqref{eq:linsystem}.
Starting from any initial state $x_0\in\mathbb R^m$, the iterates $(x_k)_{k\geq0}$ of the generalized randomized Kaczmarz algorithm converge to $x^\ast$ in the sense that
\begin{equation}  \label{eq:rateKaczmarz}
\sqrt{\mathbb E|x_k-x^\ast|^2}\ \leq\ (1-\rho)^k|x_0-x^\ast| \quad \text{with rate} \quad 
    \rho\ =\ \frac12\inf_{|\zeta|=1}\mathbb E_{v\sim\tau}\Bigr(\zeta\cdot\frac{A^Tv}{|A^Tv|}\Bigr)^2\;.
\end{equation}
\end{lemma}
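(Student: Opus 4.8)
The plan is to mirror the synchronous-coupling argument used for generalized Hit-and-Run: interpret the error $x_k-x^\ast$ as the difference between two copies of the algorithm, one of which is permanently parked at the solution, and then read off the contraction from Lemma \ref{lem:contrproj}.

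First I would record the key structural fact. Since $x^\ast$ solves $Ax=b$, we have $v\cdot(Ax^\ast-b)=0$, i.e. $x^\ast\in H(v)$ for \emph{every} direction $v$; consequently the orthogonal projection onto $H(v)$ leaves $x^\ast$ fixed, which in the explicit form of the update reads
\[ \Pi_{A^Tv}\,x^\ast+\frac{v\cdot b}{|A^Tv|}\frac{A^Tv}{|A^Tv|}\ =\ x^\ast\;. \]
Subtracting this identity from the update rule for $x_{k+1}$, the affine term cancels and one is left with the clean recursion
\[ x_{k+1}-x^\ast\ =\ \Pi_{A^Tv}\,x_k-\Pi_{A^Tv}\,x^\ast\ =\ \Pi_{A^Tv}\,(x_k-x^\ast)\;,\qquad v\sim\tau\text{ independent of }x_k \]
(tacitly assuming $A^Tv\neq0$ $\tau$-a.s.; on the $\tau$-null set where $A^Tv=0$ the update is the identity and contributes trivially).

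Next I would condition on the past. Let $\mathcal F_k$ be the $\sigma$-algebra generated by the first $k$ sampled directions, so $x_k$ is $\mathcal F_k$-measurable and the $(k{+}1)$-st direction is independent of $\mathcal F_k$. Applying Lemma \ref{lem:contrproj} with $\eta=\law(A^Tv)$, $w=A^Tv$ and $z=x_k-x^\ast$ gives
\[ \mathbb E\bigl[\,|x_{k+1}-x^\ast|^2\mid\mathcal F_k\,\bigr]\ \leq\ \Bigl(1-\inf_{|\zeta|=1}\mathbb E_{v\sim\tau}\Bigl(\zeta\cdot\frac{A^Tv}{|A^Tv|}\Bigr)^2\Bigr)\,|x_k-x^\ast|^2\ =\ (1-2\rho)\,|x_k-x^\ast|^2\;. \]
Taking expectations and iterating over $k$ yields $\mathbb E|x_k-x^\ast|^2\leq(1-2\rho)^k|x_0-x^\ast|^2$. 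Here $\rho\leq\tfrac12$ because the integrand is bounded by $1$, so $1-2\rho\geq0$ and the square root is meaningful; finally $(1-2\rho)^{1/2}\leq1-\rho$ (equivalently $0\leq\rho^2$) upgrades the mean-squared bound to the stated estimate $\sqrt{\mathbb E|x_k-x^\ast|^2}\leq(1-\rho)^k|x_0-x^\ast|$.

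I do not anticipate a genuine obstacle: the lemma is essentially a corollary of Lemma \ref{lem:contrproj} once one observes that the solution lies on every hyperplane $H(v)$, exactly paralleling \eqref{eq:synchronous_coupling}. The only points needing a little care are the conditioning on the history (since $x_k$ is itself random), the handling of directions with $A^Tv=0$ when $d>m$, and the elementary bookkeeping to pass from the contraction rate $1-2\rho$ in the mean square to the advertised rate $1-\rho$.
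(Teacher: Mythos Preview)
Your proposal is correct and follows essentially the same route as the paper: observe that $x^\ast$ is a fixed point of every projection, reduce the error recursion to $x_{k+1}-x^\ast=\Pi_{A^Tv}(x_k-x^\ast)$, apply Lemma~\ref{lem:contrproj}, iterate, and finish with $\sqrt{1-2\rho}\leq1-\rho$. Your version is slightly more explicit about conditioning on the filtration and about the null set where $A^Tv=0$, but the argument is otherwise identical.
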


In the special case where the probability measure $\tau$ is given by
\[ \tau\ =\ \|A\|_F^{-2}\sum_{i=1}^d|A^Te_i|^2\delta_{e_i} \]
the \emph{generalized randomized Kaczmarz algorithm}  reduces to the classical \emph{randomized Kaczmarz algorithm}.  In this case, Lemma \ref{lem:kaczmarz} recovers the sharp convergence rate from \cite{vershynin09}:
\begin{equation}\label{eq:raterandKaczmarz}
    \rho\ =\ \frac12\|A\|_F^{-2}\inf_{|\zeta|=1}\sum_{i=1}^d|A^Te_i|^2\Bigr(\zeta\cdot\frac{A^Te_i}{|A^Te_i|}\Bigr)^2\ =\ \frac12\|A\|_F^{-2}\inf_{|\zeta|=1}|A\zeta|^2\ \geq\ \frac12\|A\|_F^{-2}\|A^{-1}\|^{-2} \;.
\end{equation}
Here, $\| \cdot \|_F$ denotes the Frobenius matrix norm, and $\| \cdot \|$ denotes the operator norm.  The inequality in \eqref{eq:raterandKaczmarz} follows from the fact that the left inverse $A^{-1}$ satisfies $|z|\leq\|A^{-1}\||Az|$ for all $z \in \mathbb{R}^m$.

\begin{proof}[Proof of Lemma~\ref{lem:kaczmarz}]
Let $x_0\in\mathbb R^m$ and consider synchronously coupled iterations starting from $x_0$ and $x^\ast$:
\[ x_1\ =\ \Pi_{A^Tv}\,x_0+\frac{v\cdot b}{|A^Tv|}\frac{A^Tv}{|A^Tv|}\quad\text{and}\quad x^\ast\ =\ \Pi_{A^Tv}\,x^\ast+\frac{v\cdot b}{|A^Tv|}\frac{A^Tv}{|A^Tv|}\quad\text{using the same $v\sim\tau$.} \]
Since $x^\ast$ is a fixed point of the iteration,
\[ x_1-x^\ast\ =\ \Pi_{A^Tv}\,(x_0-x^\ast) \;. \]
By Lemma \ref{lem:contrproj}, we have
\[ \mathbb E_{v\sim\tau}|x_1-x^\ast|^2\ =\ \mathbb E_{v\sim\tau}\bigr|\Pi_{A^Tv}\,(x_0-x^\ast)\bigr|^2\ \leq\ (1-2\rho)|x_0-x^\ast|^2 \]
where $\rho$ is defined in \eqref{eq:rateKaczmarz}.  The proof is completed by iterating this bound, taking square roots, and using $\sqrt{1 - 2 \mathsf{x}} \le 1- \mathsf{x}$ valid for $\mathsf{x} \in [0,1/2)$.
\end{proof}

\begin{figure}[t]
\noindent\begin{minipage}[b]{0.25\textwidth}
\includegraphics[width=\textwidth]{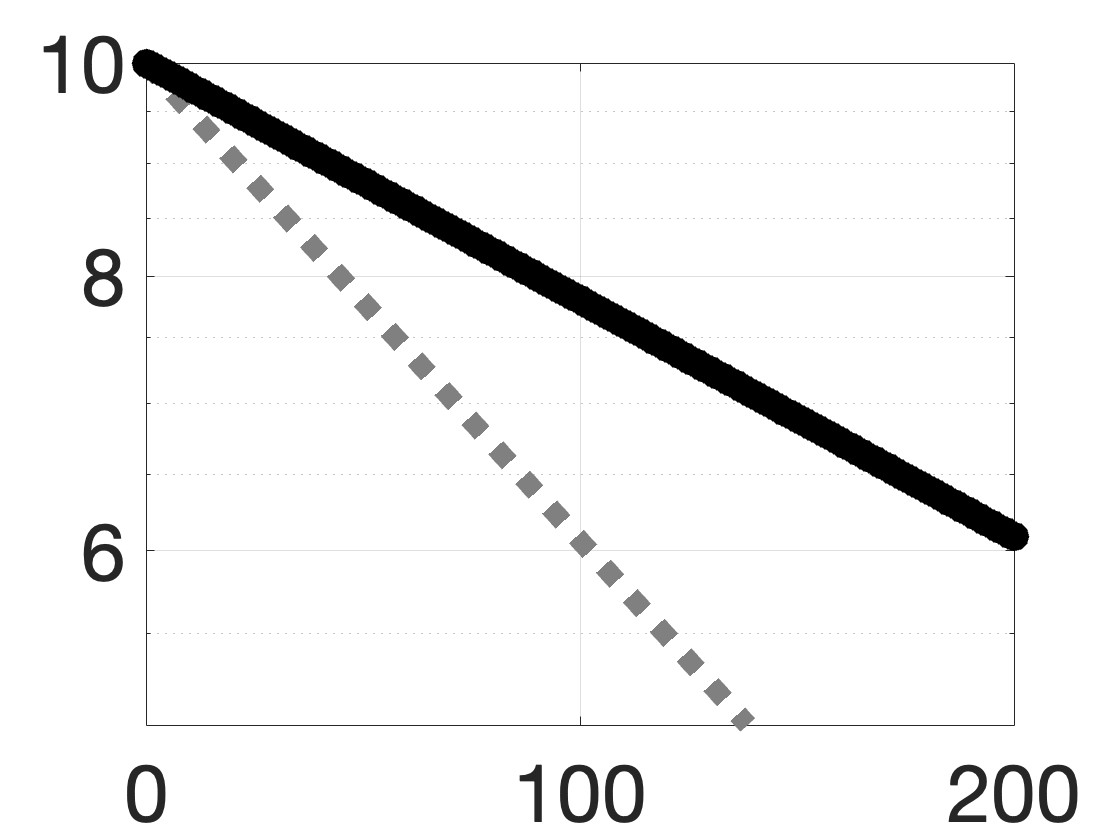}
\caption*{(a) $a=0.1$}
\end{minipage}%
\noindent\begin{minipage}[b]{0.25\textwidth}
\includegraphics[width=\textwidth]{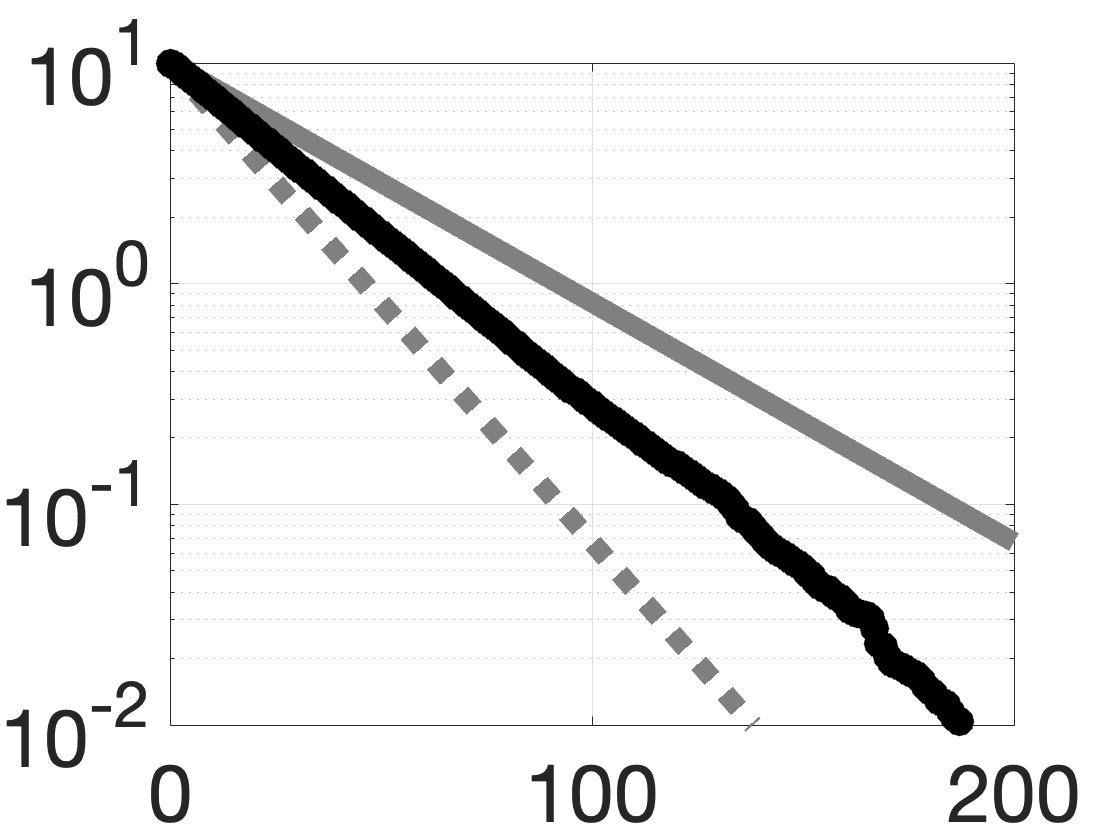}
\caption*{(b) $a=0.1$}
\end{minipage}%
\noindent\begin{minipage}[b]{0.25\textwidth}
\includegraphics[width=\textwidth]{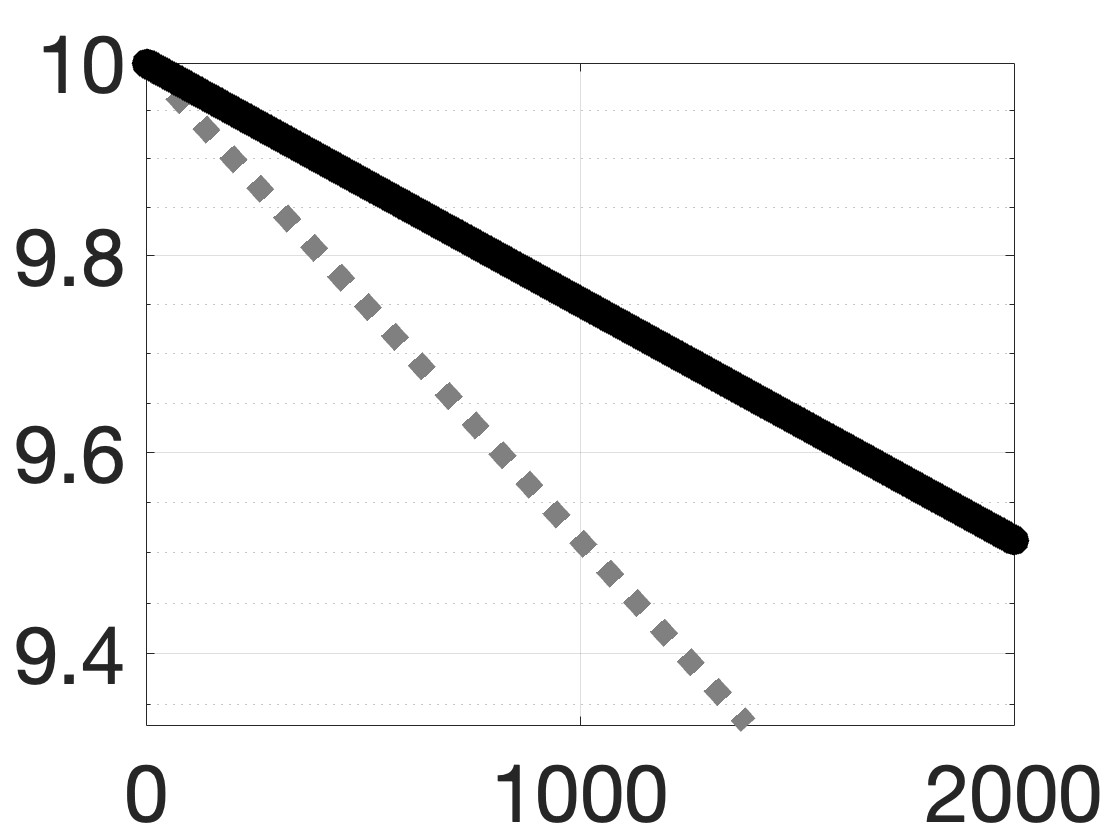}
\caption*{(c) $a=0.01$}
\end{minipage}%
\noindent\begin{minipage}[b]{0.25\textwidth}
\includegraphics[width=\textwidth]{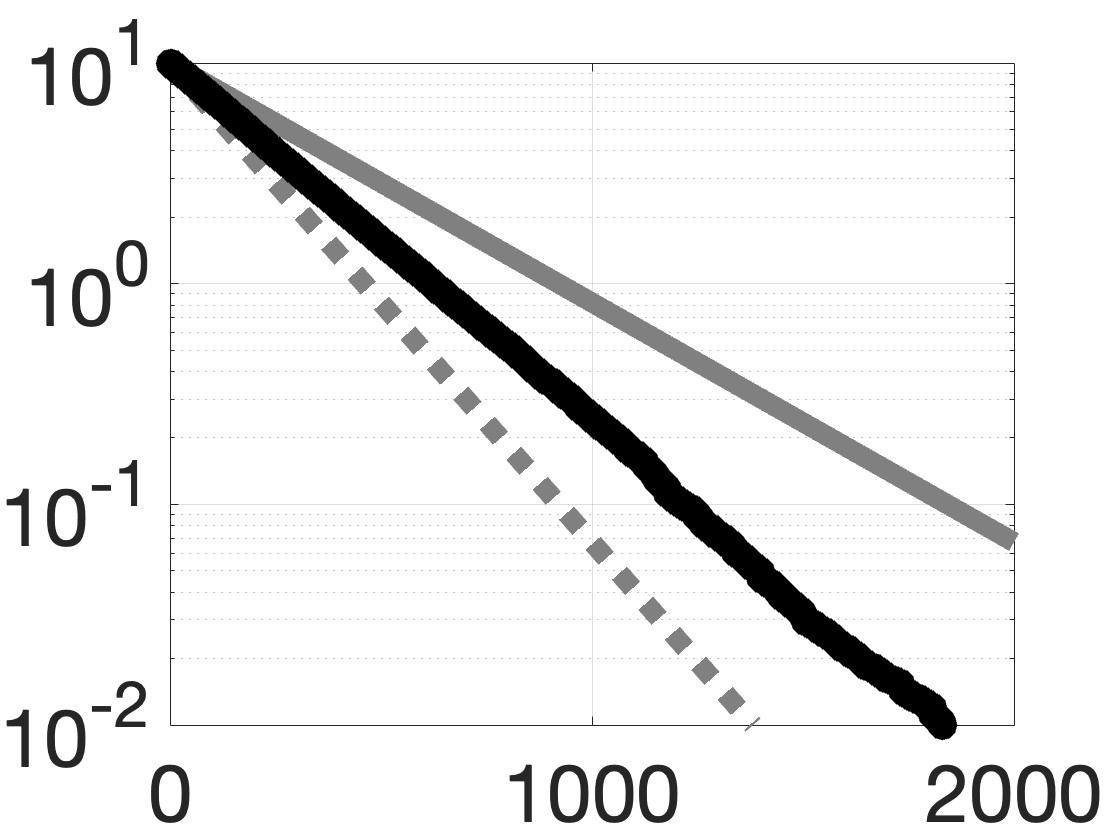}
\caption*{(d) $a=0.01$}
\end{minipage}%
\caption{\textit{Convergence of randomized Kaczmarz (a), (c) vs. coordinate-free randomized Kaczmarz (b), (d) for $Ax=0$, with $A$ as defined in \eqref{eq:A}, starting from $x_0=(-10,0)$.  Solid black lines show the mean error $\sqrt{\mathbb E|x_k-x^\ast|^2}$ averaged over 10,000 realizations.  Only the coordinate-free variant converges within the displayed number of iterations.  Solid gray lines show the theoretically guaranteed diffusive rate $a^2/4$ in (a),(c) and ballistic rate $a/4$ in (b), (d), while dotted gray lines indicate twice these theoretical rates.  Randomized Kaczmarz converges exactly at the theoretical rate, while coordinate-free randomized Kaczmarz converges within an absolute constant of the theoretical rate.
}}
\label{fig:kaczmarz_sim}
\end{figure}

In certain settings, Lemma \ref{lem:kaczmarz} implies a diffusive to ballistic speed-up for \emph{coordinate-free randomized Kaczmarz} compared to the classical \emph{randomized Kaczmarz}, analogous to the speed-up observed for Hit-and-Run compared to random-scan Gibbs.  To illustrate, consider the bivariate example
\begin{equation}\label{eq:A}
    A\ =\ \begin{pmatrix} 0 & 1 \\ a & 1 \end{pmatrix}\quad\text{for $a\in(0,1)$ small.}
\end{equation}
Randomized Kaczmarz, being restricted to move along the directions $(0,1)$ and $(a,1)$, is constrained to directions that are nearly orthogonal to $e_1$.  As a result, it exhibits slow convergence due to predominantly local moves along $e_2$.  In contrast, the coordinate-free randomized Kaczmarz algorithm moves in the direction $A^Tv$ with $v\sim\Unif(\mathbb S^1)$, enabling global moves along $e_1$ with sufficiently high probability, thereby accelerating convergence, see Figure \ref{fig:kaczmarzlaw}.

For randomized Kaczmarz, the diffusive rate can be derived from \eqref{eq:raterandKaczmarz}:
\begin{equation}\label{eq:kaczmarz_diffusive}
\rho\ =\ \frac12\|A\|_{F}^{-2}\inf_{|\zeta|=1}|A\zeta|^2\ =\ \frac{a^2}{2(2+a^2)}\ \asymp\ \frac{a^2}{4}\;.
\end{equation}
On the other hand, for the coordinate-free randomized Kaczmarz algorithm, the rate of convergence improves to the ballistic rate:
\begin{equation}\label{eq:kaczmarz_ballistic}
\rho\ =\ \frac12\inf_{|\zeta|=1}\mathbb E_{v\sim\Unif(\mathbb S^1)}\Bigr(\zeta\cdot\frac{A^Tv}{|A^Tv|}\Bigr)^2\ =\ \frac{a(1+a)}{2(2+a(2+a))}\ \asymp\ \frac{a}{4}\;.
\end{equation}
This diffusive to ballistic speed-up is numerically illustrated in Figure \ref{fig:kaczmarz_sim}.

\printbibliography

\end{document}